\newtheorem{thm}{Theorem}[section]
\newtheorem{cor}[thm]{Corollary}
\newtheorem{lema}[thm]{Lemma}
\newtheorem{prop}[thm]{Proposition}
\theoremstyle{definition}
\newtheorem{defn}[thm]{Definition}
\theoremstyle{remark}
\newtheorem{rem}[thm]{Remark}
\numberwithin{equation}{section}
\newcommand{\R}{\mathbb R}
\newcommand{\ve}{\varepsilon}
\newcommand{\N}{\mathbb N}
\newcommand{\Z}{\mathbb Z}
\newcommand{\lam}{\lambda}
\newcommand{\cd}{\rightharpoonup}
\newcommand{\cf}{\rightarrow}
\begin{document}
\title[Eigenvalue homogenization]{Convergence rate for quasilinear eigenvalue homogenization}
\author[J Fern\'andez Bonder, J P Pinasco, A M Salort]{Juli\'an Fern\'andez Bonder, Juan P. Pinasco, Ariel M. Salort }
\address{Departamento de Matem\'atica
 \hfill\break \indent FCEN - Universidad de Buenos Aires and
 \hfill\break \indent   IMAS - CONICET.
\hfill\break \indent Ciudad Universitaria, Pabell\'on I \hfill\break \indent   (1428)
Av. Cantilo s/n. \hfill\break \indent Buenos Aires, Argentina.}
\email[J. Fernandez Bonder]{jfbonder@dm.uba.ar}
\urladdr[J. Fernandez Bonder]{http://mate.dm.uba.ar/~jfbonder}
\email[J.P. Pinasco]{jpinasco@dm.uba.ar}
\urladdr[J.P. Pinasco]{http://mate.dm.uba.ar/~jpinasco}
\email[A.M. Salort]{asalort@dm.uba.ar}


\subjclass[2010]{35B27, 35P15, 35P30}

\keywords{Eigenvalue homogenization, nonlinear eigenvalues, order of convergence}

\begin{abstract}
In this work we study the homogenization problem for nonlinear eigenvalues of
quasilinear elliptic operators. We obtain an explicit order of convergence in $k$ and
in $\ve$ for the (variational) eigenvalues.
\end{abstract}

\maketitle

\section{Introduction}

In this paper we study the asymptotic behavior (as $\ve\to 0$) of the eigenvalues of
the following problems
\begin{equation} \label{pro1}
\begin{cases}
  -div(a_\ve(x,\nabla u^\ve))=\lam^\ve \rho_\ve |u^\ve|^{p-2}u^\ve &\quad \textrm{ in } \Omega \\
   u^\ve =0 &\quad \textrm{ on } \partial \Omega, \\
 \end{cases}
\end{equation}
where $\Omega \subset \R^N$ is a bounded domain,  $\ve$ is a positive real number, and
$\lam^\ve$ is the eigenvalue parameter.

The weight functions $\rho_\ve(x)$ are assumed to be positive and uniformly bounded
away from zero and infinity
\begin{equation}\label{cota.rho}
0<\rho^-\le \rho_\ve(x)\le \rho^+<\infty
\end{equation}
and the family of operators $a_\ve(x,\xi)$ have precise
hypotheses that are stated below, but the prototypical example is
\begin{equation} \label{tipico}
a_\ve(x,\nabla u^\ve) = A^\ve(x)|\nabla u^\ve|^{p-2}\nabla u^\ve,
\end{equation}
with $1<p<+\infty$, and $A^\ve(x)$ is a family of uniformly elliptic matrices (both in
$x\in\Omega$ and in $\ve>0$).

The study of this type of problems have a long history due to its relevance in different fields of applications. The problem of finding the asymptotic behavior of the eigenvalues of \eqref{pro1} is an important part of what is called {\em Homogenization Theory}. Homogenization Theory is applied in composite materials in which the physical parameters such as conductivity and elasticity are oscillating.  Homogenization Theory try to get a good approximation of the macroscopic behavior of the heterogeneous material by letting the parameter $\ve \cf 0$. The main references for the homogenization theory of periodic structures are the books by Bensoussan-Lions-Papanicolaou \cite{BLP78}, Sanchez--Palencia \cite{SP70}, Ole{\u\i}nik-Shamaev-Yosifian \cite{Ol}  among others.

In the linear setting (i.e., $a_\ve(x,\xi)$ as in \eqref{tipico} with $p=2$) this problem is well understood. It is known that, up to a subsequence, there exists a limit operator $a_h(x,\xi)=A^h(x)\xi$ and a limit function $\bar\rho$ such that the spectrum of \eqref{pro1} converges to that of the limit problem.
\begin{equation} \label{limit.prob}
\begin{cases}
  -div(a_h(x,\nabla u))=\lam \bar\rho |u|^{p-2}u &\quad \textrm{ in } \Omega \\
   u =0 &\quad \textrm{ on } \partial \Omega, \\
 \end{cases}
\end{equation}

Let us recall that the convergence of eigenvalues in the multidimensional linear case was studied in 1976 by Boccardo and Marcellini \cite{BM76} for general bounded matrices. Kesavan \cite{Kes1} studied the problem in an periodic frame.
This is an important case of homogenization, i.e. when $\rho_\ve(x) = \rho(x/\ve)$ and $A_\ve(x) = A(x/\ve)$ where $\rho(x)$ and $A(x)$ are $Q-$periodic functions, $Q$ being the unit cube in $\R^N$ and in this case, the limit problem can be fully characterized and so the entire sequence $\ve\to0$ is convergent. See \cite{Kes1, Kes2}.

In the general nonlinear setting, Baffico, Conca and Donato \cite{Con},
relying on the $G-$convergence results of Chiad\'o Piat, Dal Maso and Defranceschi
\cite{Chi1} for monotone operators, study the convergence problem of the principal
eigenvalue of \eqref{pro1}. The concept of $G-$convergence of linear elliptic second
order operators was introduced by Spagnolo in \cite{SP68}. See Section 2 for the
precise definitions.

The convergence problem for higher (variational) nonlinear eigenvalues was addressed
by T. Champion and L. De Pascale in \cite{ChP07} where by means of the $G-$conver-gence
methods they are able to show convergence of the (variational) eigenvalues of
\eqref{pro1} to those of the limit equation \eqref{limit.prob}.

Now, we turn our attention to the order of convergence of the eigenvalues that is the focus of this work. To this end, we restrict ourselves to the problems
\begin{equation} \label{pro2}
\begin{cases}
  -div(a(x, \nabla u^\ve) \nabla u^\ve)=\lam^\ve \rho_\ve |u^\ve|^{p-2}u^\ve &\quad \textrm{ in } \Omega \\
   u^\ve =0 &\quad \textrm{ on } \partial \Omega,
 \end{cases}
\end{equation}
where the family of weight functions $\rho_\ve$ are given in terms of a single bounded $Q-$periodic function $\rho$ in the form $\rho_\ve(x) := \rho(x/\ve)$, $Q$ being the unit cube of $\R^N$.

The limit problem is then given by
\begin{equation} \label{pro3}
\begin{cases}
  -div(a(x, \nabla u) \nabla u)=\lam \bar{\rho} |u|^{p-2}u &\quad \textrm{ in } \Omega \\
   u =0 &\quad \textrm{ on } \partial \Omega,
 \end{cases}
\end{equation}
where $\bar\rho$ is the average of $\rho$ in $Q$, i.e. $\bar\rho = \int_Q \rho(x)\, dx$.

The first estimate of the order of convergence of the eigenvalues, for the linear case, can be found in Chapter III, section 2 of \cite{Ol}. By estimating the eigenvalues of the inverse operator, which is compact,  and using tools from functional analysis in Hilbert spaces, it is prove that
$$
|\lam_k^{\ve}-\lam_k| \le \frac{C\lam_k^{\ve}(\lam_k)^2 }{1-\lam_k\beta_k^{\ve}} \; \ve^{\frac12}.
$$
Here, $C$ is a positive constant, and $\beta_{\ve}^k$ satisfies
$$
0\le \beta_{\ve}^k < \lam_k^{-1},
$$
with
$$
\lim_{\ve \to 0}\beta_{\ve}^k=0
$$
for each $k\ge 1$.

The problem, again in the linear setting and in dimension $N=1$, with $a=1$, was recently studied by Castro and Zuazua in \cite{Zua2, Zua1}. In those articles the authors, using the so-called WKB method which relays on asymptotic expansions of the solutions of the problem, and the explicit knowledge of the eigenfunctions and eigenvalues of the constant coefficient limit problem, proved
$$
|\sqrt{\lam_k^\ve} - \sqrt{\lam_k}|\le C k^2 \ve,
$$
equivalently, since the Weyl's formula implies that $\lam_k^\ve \sim k^2$,
$$
|\lam_k^\ve - \lam_k| \le C k^3\ve
$$
and they also presented a variety of results on correctors for the eigenfunction approximation. Let us mention that their method needs higher regularity on the weight $\rho$, which must belong at least to $C^2(\Omega)$ and that the bound holds for $k\sim \ve^{-1}$.

In the linear problem, in any space dimension, Kenig, Lin and Shen \cite{Kenig}  (allowing an $\ve$ dependance in the diffusion matrix of the elliptic operator) proved that for Lipschitz domains $\Omega$ one has
$$
|\lam_k^\ve - \lam_k| \le C \ve |\log(\ve)|^{\frac12 + \sigma}
$$
for any $\sigma>0$, $C$ depending on $k$ and $\sigma$.

Moreover, the authors show that if the domain $\Omega$ is more regular ($C^{1,1}$ is enough) they can get rid of the logarithmic term in the above estimate. However, no explicit dependance of $C$ on $k$ is obtained in that work.

Later on, in \cite{Kenig2} the authors obtain the precise dependence on $k$ showing that
$$
|\lam_k^\ve - \lam_k| \le C k^{\frac{3}{N}} \ve |\log(\ve)|^{\frac12 + \sigma}
$$
for any $\sigma>0$, $C$ depending on  $\sigma$. Again, when the domain $\Omega$ is smooth, the logarithmic term can be removed.

In this paper  we analyze the order of convergence of eigenvalues of \eqref{pro2} to
the ones of \eqref{pro3} and we prove that,
$$
|\lam_k^\ve - \lam_k|\le C k^{\frac{p+1}{N}} \ve
$$
with $C$ independent of $k$ and $\ve$. In this result, by $\lam_k^\ve$ and $\lam_k$ we refer to the variational eigenvalues of problems \eqref{pro2} and \eqref{pro3} respectively.

\medskip

Some remarks are in order:
\begin{enumerate}
\item Classical Weyl's estimates on the eigenvalues of second order
    $N$-dimensional problems show that $\lam_k$ and $\lam_k^\ve$ behaves like
    $ck^{\frac{2}{N}}$, with $c$ depending only on the coefficients of the
    operator and $N$. Hence, the order of growth of the right-hand side in the
    estimate of \cite{Ol} is
$$
\frac{\lam_k^{\ve}(\lam_k)^2  \ve^{\frac12}}{1-\lam_k\beta_k^{\ve}} \sim \frac{k^{\frac{6}{N}} \ve^{\frac{1}{2}}}{1-\lam_k\beta_k^{\ve}}  \le C k^{\frac{6}{N}} \ve^{\frac12}.
$$
Moreover, the constant involved in their bound is unknown.

\item If we specialize our result to the one dimensional linear case, we recover the estimate obtained in \cite{Zua1}. Moreover, we are considering more general weights $\rho$ since very low regularity is needed and the estimate is valid for any $k$. On the other hand, no corrector results are presented here.

\item In our result no regularity assumptions on $\Omega$ are needed. Any bounded open set will do.

\item The constant $C$ entering in our estimate, can be computed {\em explicitly} in terms of the weight $\rho$, the diffusion operator $a(x,\xi)$ and the Poincar\'e constant in the unit cube of $\R^N$ (see Theorem \ref{explicit}).
\end{enumerate}

In the one dimensional problem, we can provide a extremely elementary proof of Theorems \ref{rate} and \ref{explicit}, generalizing the estimates obtained in \cite{Zua1}. Moreover, in   this case an oscillating  coefficient can
be allowed.

Let us mention that in the nonlinear case considered in this paper there are no quantitative  estimates on the convergence of the eigenfunctions. However, it is possible to prove in
one spatial dimension that the zeros of the eigenfunctions converge to the zeros of the ones of the limit problem and we also find an explicit estimate of the rate of convergence of the nodal domain.

\subsection*{Organization of the paper}

The rest of the paper is organized as follows: In Section 2, we collect some
preliminary results on monotone operators that are needed in order to deal with
\eqref{pro1} and some facts about  the eigenvalue problem. We also recall the definition and some properties of $G-$convergence.
In Section 3 we prove the convergence of the $k$th--variational eigenvalue of problem
\eqref{pro2} to the $k$th--variational eigenvalue of the limit problem \eqref{pro3},
and we study the rates of convergence. In Section 4 we study the one dimensional
problem, and we show in Section 5 that in this case, an oscillating  coefficient can
be allowed. In Section 6 we deal with the zeros of eigenfunctions, and we close the paper in Section 7 with some numerical results.


\section{Preliminary results}

In this section we review some results gathered from the literature, enabling us to clearly state our results and making the paper self-contained.

\subsection{Monotone operators}
We consider $\mathcal{A}\colon W_0^{1,p}(\Omega)\to W^{-1,p'}(\Omega)$ given by
$$
\mathcal{A} u := -div(a(x,\nabla u)),
$$
where $a\colon \Omega\times \R^N\to \R^N$ satisfies,  for every $\xi\in\R^N$ and  a.e. $x\in \Omega$, the following conditions:

\begin{enumerate}
\item[(H0)] {\em measurability:} $a(\cdot,\cdot)$ is a Carath\'eodory function, i.e. $a(x,\cdot)$ is continuous a.e. $x\in \Omega$,  and $a(\cdot,\xi)$ is measurable for every $\xi\in\R^N$.

\item[(H1)] {\em monotonicity:} $0\le (a(x,\xi_1)-a(x,\xi_2))(\xi_1-\xi_2)$.

\item[(H2)] {\em coercivity:} $\alpha |\xi|^p \le a(x,\xi) \xi$.

\item[(H3)] {\em continuity:} $a(x,\xi)\le \beta|\xi|^{p-1}$.

\item[(H4)] {\em $p-$homogeneity:} $a(x,t\xi)=t^{p-1} a(x,\xi)$ for every $t>0$.

\item[(H5)] {\em oddness:} $a(x,-\xi) = -a(x,\xi)$.
\end{enumerate}

Let us introduce $\Psi(x,\xi_1, \xi_2)=a(x,\xi_1) \xi_1 +a(x,\xi_2) \xi_2$ for all $\xi_1, \xi_2 \in \R^N$, and all $x\in \Omega$; and let $\delta=min\{p/2, (p-1)\}$.

\begin{enumerate}
\item[(H6)] {\em equi-continuity:}
$$
|a(x,\xi_1) -a(x,\xi_2)| \le c \Psi(x,\xi_1, \xi_2)^{(p-1-\delta)/p}(a(x,\xi_1) -a(x,\xi_2)) (\xi_1-\xi_2)^{\delta/p}
$$

\item[(H7)] {\em cyclical monotonicity:} $\sum_{i=1}^k  a(x,\xi_i) (\xi_{i+1}-\xi_i) \le 0$, for all $k\ge 1$, and $\xi_1,\ldots, \xi_{k+1}$, with $\xi_1=\xi_{k+1}$.

\item[(H8)] {\em strict monotonicity:} let $\gamma = \max(2,p)$, then
$$
\alpha |\xi_1-\xi_2|^{\gamma}\Psi(x,\xi_1,\xi_2)^{1-(\gamma/p)}\le (a(x,\xi_1)-a(x,\xi_2)) (\xi_1-\xi_2).
$$
\end{enumerate}

See \cite{Con}, Section 3.4 where a detailed discussion on the relation and implications of every condition (H0)--(H8) is given.

In particular, under these conditions, we have the following Proposition:

\begin{prop}[\cite{Con}, Lemma 3.3]\label{potential.f}
Given $a(x,\xi)$ satisfying {\em (H0)--(H8)} there exists a unique Carath\'eodory function $\Phi$ which is even, $p-$homogeneous strictly convex and differentiable in the variable $\xi$ satisfying
\begin{equation}\label{cont.coer.phi}
\alpha |\xi|^p \le \Phi(x,\xi)\le \beta |\xi|^p
\end{equation}
for all $\xi\in \R^N$ a.e. $x\in\Omega$ such that
$$
\nabla_{\xi} \Phi(x,\xi)= p a(x,\xi)
$$
and normalized such that $\Phi(x,0)=0$.
\end{prop}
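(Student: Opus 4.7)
The plan is to realize $\Phi$ as a potential for the vector field $p\,a(x,\cdot)$, a construction that is made possible by cyclical monotonicity. Concretely, for fixed $x$ define
$$
\Phi(x,\xi) := \sup\left\{p\,a(x,\xi_n)\cdot(\xi-\xi_n) + \sum_{i=0}^{n-1} p\,a(x,\xi_i)\cdot(\xi_{i+1}-\xi_i)\right\},
$$
the supremum ranging over $n\ge 0$ and over finite chains $\xi_0=0,\xi_1,\ldots,\xi_n\in\R^N$. Hypothesis (H7) makes this supremum finite, and as a pointwise supremum of affine functions of $\xi$ the resulting $\Phi$ is automatically convex and lower semicontinuous, with $p\,a(x,\xi)\in\partial_\xi\Phi(x,\xi)$ built into the construction; the normalization $\Phi(x,0)=0$ follows from $a(x,0)=0$, which is given by (H4) or (H5).

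The algebraic properties are then recovered by symmetry substitutions in the supremum defining $\Phi$: rescaling every chain point by $t>0$ and invoking (H4) gives $p$-homogeneity, while substituting $\xi_i\mapsto -\xi_i$ and invoking (H5) gives evenness. The Carath\'eodory property is inherited from $a$ by restricting the supremum to a countable dense collection of chains and using that finite convex functions on $\R^N$ are automatically continuous. For the two-sided bound \eqref{cont.coer.phi} I would first identify the closed form $\Phi(x,\xi) = a(x,\xi)\cdot\xi$ via Euler's identity for positively homogeneous convex functions applied to the subgradient inclusion, and then read off $\alpha|\xi|^p\le\Phi(x,\xi)\le\beta|\xi|^p$ from (H2) and (H3).

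It remains to pass from the inclusion $p\,a\in\partial_\xi\Phi$ to strict convexity, pointwise differentiability, and the identity $\nabla_\xi\Phi = p\,a$. Strict convexity follows from (H8): an affine piece of $\Phi$ along a segment would force its subgradient to be constant there, contradicting the strict monotonicity estimate. Strict convexity then makes $\partial_\xi\Phi$ single-valued, and together with the continuity of the selection $p\,a(x,\cdot)$ (built into (H6)) this yields $\Phi\in C^1$ in $\xi$ with $\nabla_\xi\Phi = p\,a$ pointwise. Uniqueness is immediate since two candidates would have identical gradient and identical value at the origin. I expect this last step to be the main obstacle: cyclical monotonicity alone only delivers convexity with a.e.\ differentiability, and it is the interplay of (H6) with (H8) that allows the sharp identification of $\nabla_\xi\Phi$ at every point.
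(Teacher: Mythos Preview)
The paper does not supply its own proof of this proposition; it is quoted from \cite{Con}, Lemma~3.3, and used as a black box. Your construction via Rockafellar's characterization of cyclically monotone operators is the natural route and is essentially what underlies the cited result, so on the level of strategy there is nothing to compare.

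There is, however, one genuine slip in your final paragraph. Strict convexity of $\Phi(x,\cdot)$ does \emph{not} force $\partial_\xi\Phi$ to be single-valued: the function $\xi\mapsto |\xi|+|\xi|^2$ on $\R$ is strictly convex yet has subdifferential $[-1,1]$ at the origin. Strict convexity makes $\partial_\xi\Phi$ injective as a multimap, which is a different statement. What actually delivers differentiability is only the second half of your sentence: if a finite convex function on $\R^N$ admits a continuous selection $g$ of its subdifferential, then the subdifferential collapses to $\{g\}$ everywhere (combine monotonicity of $\partial\Phi$ with continuity of $g$ along rays through the given point), and hence the function is $C^1$ with gradient $g$. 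So the correct chain is: (H0) already gives continuity of $\xi\mapsto p\,a(x,\xi)$ (you do not need (H6) for this), the continuous-selection argument then yields $\Phi(x,\cdot)\in C^1$ with $\nabla_\xi\Phi=p\,a$, and \emph{afterwards} (H8) gives strict convexity as an independent conclusion rather than as a step toward differentiability. With that reordering your argument goes through.
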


\subsection{The nonlinear eigenvalue problem}\label{autovalores}
In this section we review some properties of the spectrum of \eqref{pro1} for fixed
$\ve$. That is, we study
\begin{equation}\label{autov.eq}
\begin{cases}
  -div(a(x,\nabla u ))=\lam  \rho  |u|^{p-2}u  &\quad \textrm{ in } \Omega \\
   u  =0 &\quad \textrm{ on } \partial \Omega, \\
\end{cases}
\end{equation}
where $0<\rho_-\le \rho(x)\le \rho_+$ and $0<\alpha\le a(x)\le \beta$ for some
constants $\rho_-, \rho_+, \alpha$ and $\beta$.

We denote by
$$
\Sigma := \{\lam\in\R\colon \mbox{there exists } u\in W^{1,p}_0(\Omega), \mbox{ nontrivial solution to \eqref{autov.eq}}\}
$$
the spectrum of \eqref{autov.eq}.

By means of the critical point theory of Ljusternik--Schnirelmann it is straightforward
to obtain a discrete sequence of variational eigenvalues $\{\lam_k\}_{k\in\N}$ tending
to $+\infty$. We denote by $\Sigma_{var}$ sequence of variational eigenvalues.

The $k$th--variational eigenvalue is given by
\begin{equation} \label{autov.varia}
\lam_k = \inf_{C\in \Gamma_k} \sup_{v \in C} \frac{\int_{\Omega} a(x, \nabla v) |\nabla v |^2 }{\int_{\Omega} \rho(x) |v|^p}
\end{equation}
where
$$\Gamma_k=\{C\subset W^{1,p}_0(\Omega) : C \textrm{ compact, } C=-C, \,  \, \gamma(C)\geq k\}$$
and $\gamma(C)$ is the Kranoselskii genus, see \cite{Rab2} for the definition and
properties of $\gamma$.

For the one dimensional $p-$Laplace operator in $J=(0,\ell)$,
\begin{equation}\label{unaec}
-(|w'|^{p -2} w')' =  \mu  |w|^{p -2}w
\end{equation}
with zero Dirichlet boundary conditions $w(0)=w(\ell)=0$, we have
\begin{equation}\label{formuvar}
\mu_{k} = \inf _{C\in \mathcal{C}_k} \sup _{v\in C} \frac{\int_J |v'|^p\, dx}{\int_J |v|^p\, dx},
\end{equation}
with  $v\in W_0^{1,p}(J)$.

Here, all the eigenvalues and eigenfunctions can be found explicitly:

\begin{thm}[Del Pino, Drabek and Manasevich, \cite{DDM}]\label{draman}
The eigenvalues $\mu_{k}$ and eigenfunctions $w_{k}$ of equation \eqref{unaec} on the interval $J$ are given by
$$
\mu_{k} = \frac{\pi_p^p k^p}{\ell^p},
$$
$$
w_{k}(x) = \sin_p(\pi_p kx/\ell).
$$
\end{thm}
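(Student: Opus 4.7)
The plan is to build the generalized sine function $\sin_p$ via a phase-plane shooting construction and then exploit the scale-invariance of \eqref{unaec} to read off the full spectrum. First, I would define $\sin_p$ as the unique odd $C^1$ function on $\R$ obtained by solving \eqref{unaec} with $\mu=1$, $y(0)=0$, and an appropriate normalization of $y'(0)$, extending the solution by reflection about its first positive maximum and then by periodicity. Multiplying the ODE by $y'$ yields the first integral $(p-1)|y'|^p+|y|^p\equiv c$, so the orbit in the phase plane is closed; separating variables on the monotone branch expresses its quarter-period as a multiple of $\int_0^1(1-t^p)^{-1/p}\,dt$, and this number is taken to define $\pi_p$. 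The zero set of $\sin_p$ is then precisely $\pi_p\Z$.

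Second, I would verify the explicit formulas by scaling. Setting $\alpha_k:=\pi_p k/\ell$ and $w_k(x):=\sin_p(\alpha_k x)$, the chain rule gives $(|w_k'|^{p-2}w_k')'(x)=\alpha_k^{p}\,(|\sin_p'|^{p-2}\sin_p')'(\alpha_k x)$, so the one-dimensional $p$-Laplacian applied to $w_k$ equals $\alpha_k^p$ times the $p$-Laplacian applied to $\sin_p$ evaluated at $\alpha_k x$. Since $\sin_p$ was normalized so as to be an eigenfunction with unit eigenvalue, this reads off $\mu_k=\alpha_k^p=\pi_p^p k^p/\ell^p$; the Dirichlet conditions $w_k(0)=w_k(\ell)=0$ are immediate from $\pi_p k\in\pi_p\Z$.

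Third, I would rule out any other variational eigenvalue by reversing the scaling. If $u$ solves \eqref{unaec} with $\mu>0$ and $u(0)=0$, then $v(t):=u(\mu^{-1/p}t)$ satisfies the normalized equation with unit eigenvalue, so by uniqueness $v$ is a scalar multiple of $\sin_p$. Hence $u(\ell)=0$ forces $\mu^{1/p}\ell\in\pi_p\N$, i.e.\ $\mu\in\{\mu_k\}_{k\ge 1}$, exhausting the point spectrum. To match this with the Ljusternik-Schnirelmann values \eqref{formuvar}, I would observe that $w_k$ has exactly $k-1$ interior zeros; the symmetric compact set generated by odd combinations of its $k$ nodal pieces has Krasnoselskii genus $k$, yielding $\mu_k\ge\lam_k$ in \eqref{formuvar}, while the reverse inequality follows from a Courant-type nodal argument based on the strict monotonicity in $\mu$ of the first positive zero of the corresponding initial-value problem.

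The main obstacle is the final identification of the min-max values \eqref{formuvar} with the concrete $\mu_k$: in the nonlinear setting the Ljusternik-Schnirelmann scheme is not a priori known to exhaust the spectrum, and one must use the explicit $\sin_p$ structure established in the first two steps to exclude any spurious min-max level between consecutive $\mu_k$. In one spatial dimension this is elementary but genuinely relies on the shooting/uniqueness analysis, which is why the statement is specifically one-dimensional.
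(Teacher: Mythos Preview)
The paper does not supply its own proof of this theorem: it is quoted verbatim from \cite{DDM}, and the identification of the explicit values $\mu_k=\pi_p^p k^p/\ell^p$ with the Ljusternik--Schnirelmann levels \eqref{formuvar} is deferred in the subsequent Remark to \cite{DM}. Your outline---building $\sin_p$ from the first integral $(p-1)|y'|^p+|y|^p=\text{const}$, reading off the eigenpairs by the $p$-homogeneous scaling, exhausting the point spectrum via uniqueness of the normalized initial-value problem, and then matching with \eqref{formuvar} through a nodal/genus argument---is precisely the classical route taken in those references and is correct as a sketch; the only point that deserves a word of caution is that uniqueness for the $p$-Laplacian initial-value problem is not automatic when $p\neq 2$ and must be argued separately (it holds because $v'(0)\neq 0$), but this is exactly what \cite{DDM} establishes.
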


\begin{rem} It was proved in \cite{DM} that they coincide with the variational eigenvalues given by equation \eqref{formuvar}. However, let us observe that the notation is different in both papers.
\end{rem}

The function $\sin_p(x)$ is the solution of the initial value problem
\begin{equation*}
 \begin{cases}
-(|v'|^{p -2} v')' =  |v|^{p -2}v\\
\; v(0)=0, \qquad v'(0)=1,
 \end{cases}
\end{equation*}
and is defined implicitly as
$$
x =  \int_0^{\sin_p(x)} \Big(\frac{p-1}{1-t^p}\Big)^{1/p} dt.
$$
Moreover, its first zero is $\pi_p$, given by
$$
\pi_p = 2 \int_0^1 \Big(\frac{p-1}{1-t^p}\Big)^{1/p} dt.
$$

In \cite{ACM}, problem \eqref{autov.eq} in one space dimension with $a(x,u') = |u'|^{p-2}u'$ is studied and, among other things, it is proved that any eigenfunction associated to $\lambda_k$ has exactly $k$ nodal domains. As a consequence of this fact follows the simplicity of every variational eigenvalue.

The exact same proof of \cite{ACM} works for genera $a(x,u')$, and so we obtain the following:
\begin{thm} \label{teo.zero}
Every eigenfunction of \eqref{autov.eq} corresponding to the $k$th--eigenvalue $\lambda_k$ has exactly $k-1$ zeroes. Moreover, for every $k$, $\lam_k$ is simple, consequently the eigenvalues are ordered as $0<\lam_1<\lam_2 < \cdots < \lam_k \nearrow +\infty$.
\end{thm}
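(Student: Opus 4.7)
The plan is to adapt the proof of \cite{ACM} for the one-dimensional $p$-Laplacian to the general operator $-(a(x,u'))'$ considered here, using only Proposition \ref{potential.f} and the structural hypotheses (H0)--(H8). As a first step, I would analyze the nodal structure of any eigenfunction $u\in W_0^{1,p}(J)$, $J=(0,\ell)$, associated with an eigenvalue $\lam$. Since $u\in C^1(\bar J)$, its zero set is finite, $0=x_0<x_1<\cdots<x_m=\ell$ (an accumulating zero would contradict ODE-uniqueness starting from a point where both $u$ and $u'$ vanish). On each subinterval $J_i=(x_{i-1},x_i)$ the restriction $u|_{J_i}$ is sign-definite and weakly solves \eqref{autov.eq} on $J_i$ at eigenvalue $\lam$; being sign-definite, it must be a first eigenfunction of the Dirichlet problem on $J_i$, and consequently $\lam=\mu_1(J_i)$ for every $i$.

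Next, I would show $\lam_k\le\Lam$ for any eigenvalue $\Lam$ whose eigenfunction has exactly $k-1$ interior zeros. Let $\varphi_1,\ldots,\varphi_k$ be the sign-definite nodal components (extended by zero to $J$) of such an eigenfunction; then
\[
C_k=\Big\{\sum_{i=1}^k t_i\varphi_i:\sum_{i=1}^k|t_i|^p=1\Big\}
\]
is compact and symmetric, homeomorphic to $S^{k-1}$ and hence of genus $k$, so $C_k\in\Gamma_k$. Because the $\varphi_i$ have pairwise disjoint supports, and the potential $\Phi(x,\cdot)$ from Proposition \ref{potential.f} is even and $p$-homogeneous, both the numerator and denominator of the Rayleigh quotient in \eqref{autov.varia} decompose as $\sum_i|t_i|^p$ times the corresponding per-interval integrals. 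Hence the Rayleigh quotient is constant and equal to $\Lam$ on $C_k$, which yields $\lam_k\le\Lam$.

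Finally, a shooting argument identifies the spectrum and gives simplicity. Using $p$-homogeneity, the solution of the IVP $(a(x,u'))'+\lam\rho|u|^{p-2}u=0$, $u(0)=0$, $u'(0)=1$ is determined up to a positive scalar, and its $k$th positive zero $z_k(\lam)$ is a strictly decreasing continuous function of $\lam$ tending to $0$ as $\lam\to\infty$ and to $+\infty$ as $\lam\to 0^+$. The equation $z_k(\lam)=\ell$ then defines a strictly increasing sequence $\Lam^{(1)}<\Lam^{(2)}<\cdots$ which exhausts the spectrum, with $\Lam^{(k)}$ having an eigenfunction with exactly $k-1$ interior zeros. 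Combining this with $\lam_k\to\infty$, the inequality of the previous paragraph, and the observation that every $\lam_k$ must coincide with some $\Lam^{(m)}$, the strict monotonicity forces $\lam_k=\Lam^{(k)}$; simplicity then follows from the uniqueness in the shooting problem. The main obstacle I anticipate is this ODE-uniqueness step for our quasilinear $a(x,\xi)$ when $p\neq 2$: classical Picard theory does not apply, so I would need to leverage the monotonicity hypothesis (H1) together with the sign-definiteness of the shooting solution between consecutive zeros to secure uniqueness and the required continuous monotone dependence of $z_k$ on $\lam$.
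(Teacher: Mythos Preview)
Your proposal is correct and follows exactly the route the paper indicates: the paper does not give an independent proof of this theorem, but simply states that the argument of \cite{ACM} for the one-dimensional $p$-Laplacian carries over verbatim to the general operator $a(x,u')$ under (H0)--(H8), and your outline is precisely that adaptation (nodal decomposition, genus-$k$ test set built from nodal pieces, and a shooting/monotonicity argument for simplicity). The potential obstacle you flag about ODE-uniqueness near degenerate points is real but is handled in \cite{ACM} and its variants; the paper takes this for granted.
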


Now, using the same ideas as in \cite{FBP} is easy to prove, for the one dimensional problem, that the spectrum of \eqref{autov.eq} coincides with the variational spectrum. In fact, we have:
\begin{thm} \label{teo.espec}
$\Sigma=\Sigma_{var}$, i.e.,  every solution of problem \eqref{unaec} is given by \eqref{formuvar}.
\end{thm}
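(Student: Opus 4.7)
The inclusion $\Sigma_{var}\subseteq\Sigma$ is immediate, since each variational eigenvalue is realized by a critical point of the Rayleigh quotient. The content of the theorem is the reverse inclusion, and my plan is to show that if $\mu\in\Sigma$ has an eigenfunction with exactly $n-1$ interior zeros, then $\mu=\lambda_n$. Given $\mu\in\Sigma$ and an associated eigenfunction $u\in W_0^{1,p}(J)$, I would first observe that $u$ is continuous and its zeros are isolated (by uniqueness for the associated first-order system in Pr\"ufer form and the fact that a zero with vanishing derivative forces $u\equiv 0$), so $u$ has finitely many interior zeros, say $n-1$, partitioning $J$ into $n$ nodal intervals $I_1,\ldots,I_n$.

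The first step is the upper bound $\lambda_n\le \mu$, obtained via a Ljusternik--Schnirelmann genus argument in the spirit of \cite{FBP}. Set $u_i:=u\,\chi_{I_i}\in W_0^{1,p}(J)$. Each $u_i$ is an eigenfunction of \eqref{autov.eq} on $I_i$ with eigenvalue $\mu$, so testing with itself gives $\int_{I_i}a(x,u_i')u_i'\,dx=\mu\int_{I_i}\rho|u_i|^p\,dx$. Using (H4)--(H5) (the $p$-homogeneity and oddness of $a$) and the disjointness of the supports, for any $v=\sum_{i=1}^n c_i u_i$ one checks
$$
\int_J a(x,v')v'\,dx=\sum_{i=1}^n |c_i|^p\int_{I_i}a(x,u_i')u_i'\,dx,\qquad \int_J\rho|v|^p\,dx=\sum_{i=1}^n |c_i|^p\int_{I_i}\rho|u_i|^p\,dx,
$$
so the Rayleigh quotient of $v$ equals $\mu$. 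The set $S:=\{\sum c_i u_i:\sum|c_i|^p=1\}$ is compact, symmetric, and homeomorphic to $S^{n-1}$; hence $\gamma(S)=n$ and $S\in\Gamma_n$, giving $\lambda_n\le \sup_{v\in S}R(v)=\mu$.

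The second, more delicate step is the matching lower bound $\mu\le\lambda_n$. My plan is a Sturmian comparison argument: if $\mu_1<\mu_2$ are both in $\Sigma$, then an eigenfunction of $\mu_2$ must have strictly more interior zeros than one of $\mu_1$. This oscillation-increasing property for the one-dimensional quasilinear operator can be derived exactly as in \cite{ACM, FBP} through a Pr\"ufer-type transformation (writing $(u,|u'|^{p-2}u')$ in generalized polar coordinates via $\sin_p,\cos_p$) together with monotone dependence of the angular variable on the eigenvalue parameter $\mu$. Granting this, if $\mu>\lambda_n$ held, then an eigenfunction of $\mu$ would have at least $n$ interior zeros, contradicting our choice of $n$. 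Combined with Theorem \ref{teo.zero} (simplicity and strict ordering of the variational eigenvalues) this forces $\mu=\lambda_n\in\Sigma_{var}$. The main obstacle is the Sturm comparison for general $a(x,\xi)$ satisfying (H0)--(H8); it is the one step where the one-dimensional structure is essential and where one directly borrows the machinery of \cite{FBP, ACM}, while the rest is a clean genus computation.
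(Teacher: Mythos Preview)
Your proposal is correct and follows precisely the approach the paper points to: the paper's own proof is a one-line reference (``completely analogous to that of Theorem~1.1 in \cite{FBP}''), and what you have written is an accurate unpacking of that argument---the genus construction from the nodal pieces for the upper bound $\lambda_n\le\mu$, and the Sturm--Pr\"ufer oscillation comparison for the lower bound $\mu\le\lambda_n$, combined with Theorem~\ref{teo.zero}. There is nothing to correct; you have simply supplied the details the paper chose to omit.
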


\begin{proof}
The proof of this theorem is completely analogous to that of Theorem 1.1 in \cite{FBP}.
\end{proof}

\subsection{Definition of $G-$convergence}\label{gammaconv}

We include the definition of $G-$convergence and the main results in \cite{Con, Chi1} for a sake of completeness, although we will need these facts only in Section 5.

\begin{defn}
We say that the family of operators $\mathcal{A}_\ve u := -div(a_\ve(x,\nabla u))$ $G-$converges to $\mathcal{A}u:=-div(a(x,\nabla u))$ if for every $f\in W^{-1,p'}(\Omega)$  and for every $f_\ve$ strongly convergent to $f$ in $W^{-1,p'}(\Omega)$, the solutions $u^\ve$ of the problem
\begin{equation*}
\begin{cases}
-div(a_\ve(x,\nabla u^\ve))=f_\ve &\textrm{ in } \Omega \\
u^\ve=0 & \textrm{ on } \partial \Omega
\end{cases}
\end{equation*}
satisfy the following conditions
\begin{align*}
 u^\ve \cd u  &\qquad \mbox{ weakly in }  W^{1,p}_0(\Omega), \\
 a_\ve(x, \nabla u^\ve) \cd a(x,\nabla u)  &\qquad \mbox{ weakly in }  (L^{p}(\Omega))^N,
\end{align*}
where $u$ is the solution to the equation
\begin{equation*}
\begin{cases}
-div(a(x,\nabla u))=f & \textrm{ in } \Omega  \\
u=0 &\textrm{ on } \partial \Omega.
\end{cases}
\end{equation*}
\end{defn}

For instance, in the linear periodic case,  the family $-div(A(x/\ve)\nabla u)$ $G$-converges to a limit operator $-div(A^*\nabla u)$ where $A^*$ is a constant matrix which can be characterized in terms of $A$ and certain auxiliary functions. See for example \cite{Cio}.

\medskip

It is shown in \cite{Con} that properties (H0)--(H8) are stable under $G-$convergence, i.e.
\begin{thm}[\cite{Con}, Theorem 2.3]
If $\mathcal{A}_\ve u := -div(a_\ve(x,\nabla u))$ $G-$converges to $\mathcal{A}u:=-div(a(x,\nabla u))$ and $a_\ve(x,\xi)$ satisfies {\em (H0)--(H8)}, then $a(x,\xi)$ also satisfies {\em (H0)--(H8)}.
\end{thm}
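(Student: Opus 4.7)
The plan is to use the standard ``corrector'' machinery: for each $\xi\in\R^N$ and each ball $B\subset\Omega$, one constructs a sequence $u_\ve^\xi\in W^{1,p}(B)$ satisfying $\nabla u_\ve^\xi\cd \xi$ in $L^p(B)$ and $a_\ve(\cdot,\nabla u_\ve^\xi)\cd a(\cdot,\xi)$ in $(L^{p'}(B))^N$. These are obtained by solving the problem $-\text{div}(a_\ve(x,\nabla u_\ve^\xi))=-\text{div}(a(x,\xi))$ with affine data $\xi\cdot x$, the $G$-convergence hypothesis together with the uniform coercivity/continuity (H2)--(H3) guaranteeing the claimed convergences. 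Each of the axioms (H0)--(H8) is then transferred from the $a_\ve$'s to $a$ by integrating its prelimit form against a nonnegative test function $\varphi\in C_c^\infty(B)$ and letting the ball shrink via Lebesgue differentiation.

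The linear-in-$\xi$ axioms transfer cheaply. Measurability (H0) is preserved under weak $L^{p'}$ limits. For $p$-homogeneity (H4), $t u_\ve^\xi$ is a corrector for $t\xi$, hence $a(\cdot,t\xi)=\lim a_\ve(\cdot,t\nabla u_\ve^\xi)=t^{p-1}\lim a_\ve(\cdot,\nabla u_\ve^\xi)=t^{p-1}a(\cdot,\xi)$; oddness (H5) is the case $t=-1$. Coercivity (H2) and continuity (H3) follow from weak lower semicontinuity of $L^p$ and $L^{p'}$ norms combined with the div-curl identity $a_\ve(\cdot,\nabla u_\ve^\xi)\cdot\nabla u_\ve^\xi\cd a(\cdot,\xi)\cdot\xi$ in $\mathcal{D}'(B)$, which is guaranteed by the Murat--Tartar lemma since the divergences of the fluxes are fixed in $W^{-1,p'}$ while the curls of the gradients vanish. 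Monotonicity (H1) uses two correctors $u_\ve^{\xi_1},u_\ve^{\xi_2}$: integrate the prelimit inequality against $\varphi\ge 0$, pass to the limit via div-curl, and average. Cyclical monotonicity (H7) is identical with $k$ correctors chained together.

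The main obstacle is (H6) and (H8), whose pointwise inequalities feature the auxiliary quantity $\Psi_\ve$ raised to a non-affine power, so weak convergence of the gradients is insufficient to pass the nonlinearity to the limit. The remedy is bootstrapping via strict monotonicity: applying the quantitative (H8) for $a_\ve$ to the admissible test function $u_\ve^\xi-\xi\cdot x$ and invoking H\"older on the right-hand side yields $\|\nabla u_\ve^\xi-\xi\|_{L^p(B)}\to 0$, upgrading the weak convergence of the correctors to strong. Once strong convergence is in hand, the fractional powers of $\Psi_\ve$ pass to the limit by dominated convergence along a subsequence, and the inequalities (H6) and (H8) transfer to $a$ after Lebesgue averaging over $B$. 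Assembling these pieces yields the theorem.
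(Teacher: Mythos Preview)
The paper does not give its own proof of this statement: it is quoted verbatim from \cite{Con} (Baffico--Conca--Rajesh, Theorem~2.3) as part of the preliminary background on $G$-convergence, with no argument supplied. So there is nothing in the paper to compare your sketch against.

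That said, your outline contains a genuine gap. The bootstrapping step you propose for (H6) and (H8) --- applying (H8) at the $\ve$-level to $u_\ve^\xi-\xi\cdot x$ in order to force $\|\nabla u_\ve^\xi-\xi\|_{L^p(B)}\to 0$ --- does not work. If you test the corrector equation $-\mathrm{div}\,a_\ve(x,\nabla u_\ve^\xi)=-\mathrm{div}\,a(x,\xi)$ with $u_\ve^\xi-\xi\cdot x$ and combine with (H8), the quantity you must drive to zero is $\int_B \big(a(x,\xi)-a_\ve(x,\xi)\big)\cdot(\nabla u_\ve^\xi-\xi)$, a product of two sequences that converge only weakly; it has no reason to vanish. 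More fundamentally, strong $L^p$ convergence of corrector gradients to the constant $\xi$ is \emph{false} in homogenization: already in the periodic model $a_\ve(x,\xi)=a(x/\ve,\xi)$ the gradient $\nabla u_\ve^\xi$ oscillates at scale $\ve$ and converges to $\xi$ only weakly (its two-scale limit is $\xi+\nabla_y\chi^\xi$, not $\xi$). If your claim held, the homogenized coefficient would coincide with the weak-$*$ limit of $a_\ve(\cdot,\xi)$, which it does not. The passage to the limit in (H6) and (H8) therefore requires a different mechanism from the one you sketch; the proof in \cite{Con} does not proceed through strong convergence of the correctors.
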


In the periodic case, i.e.  when $a_\ve(x,\xi)=a(x/\ve, \xi)$, and $a(\cdot, \xi)$ is $Q-$periodic for every $\xi\in \R^N$, one has that $\mathcal{A_\ve}$ $G-$converges to the homogenized operator $\mathcal{A}_h$ given by $\mathcal{A}_h u = -div(a_h(\nabla u))$, where  $a_h:\R^N \cf \R^N$ can be characterized by
\begin{equation}\label{ah}
 a_h(\xi)=\lim_{s\to \infty} \frac{1}{s^N} \int_{Q_s(z_s)} a(x,\nabla \chi^\xi_s + \xi)dx
\end{equation}
 where $\xi\in \R^N$, $Q_s(z_s)$ is the cube of side length $s$ centered at $z_s$  for any family $\{z_s\}_{s>0}$ in $\R^N$, and $\chi^\xi_s$ is the solution of the following auxiliary problem
\begin{equation}
\begin{cases}
 -div(a(x,\nabla \chi^\xi_s + \xi))=0 \quad \textrm{ in } Q_s(z_s) \\
 \chi^\xi_s\in W^{1,p}_0(Q_s(z)),
\end{cases}
\end{equation}
see \cite{DF1992} for the proof.

In the general case, one has the following compactness result due to \cite{Chi1}
\begin{prop}[\cite{Chi1}, Theorem 4.1]\label{Gconv.ve}
Assume that $a_\ve(x,\xi)$ satisfies {\em (H1)--(H3)} then, up to a subsequence, $\mathcal{A}_\ve$ $G-$converges to a maximal monotone operator $\mathcal{A}$ whose coefficient $a(x,\xi)$ also satisfies {\em (H1)--(H3)}
\end{prop}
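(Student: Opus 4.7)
The plan is to combine uniform a priori bounds with a Cantor diagonal extraction and a Minty--type monotonicity trick; this is the nonlinear analogue of Spagnolo's compactness for linear elliptic operators.

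First I would establish solvability and the uniform bounds. By the Browder--Minty theorem, hypotheses (H1)--(H3) make each $\mathcal{A}_\ve\colon W^{1,p}_0(\Omega)\to W^{-1,p'}(\Omega)$ a homeomorphism. If $u^\ve$ solves $\mathcal{A}_\ve u^\ve=f$ for a fixed $f\in W^{-1,p'}(\Omega)$, testing against $u^\ve$ and invoking (H2)--(H3) gives
\[
\|u^\ve\|_{W^{1,p}_0(\Omega)}+\|a_\ve(\cdot,\nabla u^\ve)\|_{(L^{p'}(\Omega))^N}\le C,
\]
uniformly in $\ve$, with $C$ depending only on $\|f\|_{W^{-1,p'}}$, $\alpha$ and $\beta$. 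Choosing a countable dense family $\{f_n\}\subset W^{-1,p'}(\Omega)$ and applying a Cantor diagonal extraction, one obtains a subsequence (not relabeled) along which $u_\ve^n:=\mathcal{A}_\ve^{-1}f_n\cd u^n$ in $W^{1,p}_0(\Omega)$ and $a_\ve(\cdot,\nabla u_\ve^n)\cd \sigma^n$ in $(L^{p'}(\Omega))^N$, with $-\mathrm{div}\,\sigma^n=f_n$ in $\Omega$.

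The main obstacle is to identify $\sigma^n$ as the action of a single Carath\'eodory vector field $a(x,\xi)$ applied to $\nabla u^n$, with $a$ satisfying (H1)--(H3). For this I would employ Minty's device together with correctors. For each $\xi\in\mathbb{Q}^N$ (a countable dense set is enough), construct a local corrector $w_\ve^\xi$ with asymptotic gradient $\xi$ (for instance as the solution of a suitable cell-type problem) and define $a(x,\xi)$ as a weak-$L^{p'}$ subsequential limit of $a_\ve(\cdot,\nabla w_\ve^\xi)$. Monotonicity (H1) of $a_\ve$ yields
\[
\int_\Omega\bigl(a_\ve(\cdot,\nabla u_\ve^n)-a_\ve(\cdot,\nabla w_\ve^\xi)\bigr)\cdot(\nabla u_\ve^n-\nabla w_\ve^\xi)\,\varphi\,dx\ge 0
\]
for every nonnegative $\varphi\in C_c^\infty(\Omega)$, and the div--curl lemma allows the passage to the limit, producing $\int_\Omega(\sigma^n-a(\cdot,\xi))\cdot(\nabla u^n-\xi)\,\varphi\,dx\ge 0$. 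A standard Minty perturbation $\xi\leadsto\nabla u^n+t\eta$ with $t\to 0^\pm$ along the dense set then gives $\sigma^n=a(\cdot,\nabla u^n)$ a.e. The delicate points here are the simultaneous construction of the correctors in all directions and the verification that the resulting $a$ can be realized pointwise in $x$ as a genuine Carath\'eodory function rather than merely as a nonlinear operator.

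Finally, the inheritance of (H1)--(H3) is routine: monotonicity, the coercivity $\alpha|\xi|^p\le a(x,\xi)\cdot\xi$, and the growth $|a(x,\xi)|\le\beta|\xi|^{p-1}$ pass to the limit using the weak-$L^{p'}$ convergence of $a_\ve(\cdot,\nabla w_\ve^\xi)$ and lower semicontinuity of the corresponding convex functionals, while maximal monotonicity of $\mathcal{A}$ follows from its monotonicity plus the surjectivity granted by Browder--Minty under (H2)--(H3).
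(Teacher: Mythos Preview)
The paper does not supply a proof of this proposition; it is stated and attributed directly to Theorem~4.1 of \cite{Chi1}, so there is nothing in the paper to compare your argument against. Your sketch is a fair outline of the strategy behind that result (uniform estimates from (H2)--(H3), diagonal extraction over a dense family of data, and a Minty--type identification of the weak limit of the fluxes).

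Two points deserve care if you want the outline to be a genuine proof. First, in the general (non-periodic) setting there is no ``cell-type problem'' to produce correctors; one instead takes $w_\ve^\xi$ with affine boundary data $\xi\cdot x$ on a fixed ball and solves $-\mathrm{div}\,a_\ve(x,\nabla w_\ve^\xi)=0$ there, or follows the abstract localization procedure of \cite{Chi1}. Second, under (H1) alone (plain monotonicity, no strictness), the limit $\xi\mapsto a(x,\xi)$ need not be single-valued; this is precisely why the statement speaks of a \emph{maximal monotone} limit operator rather than of a Carath\'eodory vector field. Your identification step via Minty yields a selection of that graph, and one must check it is measurable and that the growth/coercivity bounds in (H2)--(H3) transfer; this is handled in \cite{Chi1} but is not entirely ``routine''.
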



\section{Rates of convergence}
In this section we prove that $k$th--variational eigenvalue of problem \eqref{pro2} converges to the $k$th--variational eigenvalue of the limit problem \eqref{pro3}.

Moreover, our goal is to estimate the rate of convergence between the eigenvalues. That is, we want to find explicit bounds for the error $|\lambda_k^\ve - \lambda_k|$.

We begin this section by proving some auxiliary results that are essential in the remaining of the paper.

We first prove a couple of lemmas in order to prove Theorem \ref{teo_n_dim} which  is a generalization for $p\neq 2$ of Oleinik's Lemma \cite{Ol}.

We start with an easy Lemma that computes the Poincar\'e constant on the cube of side $\ve$ in terms of the Poincar\'e constant of the unit cube. Although this result is well known and its proof follows directly by a change of variables, we choose to include it for the sake of completeness.

\begin{lema}\label{poincare}
Let $Q$ be the unit cube in $\R^N$ and let $c_q$ be the Poincar\'e constant in the unit cube in $L^q$, $q\ge 1$, i.e.
$$
\| u - (u)_{Q}\|_{L^q(Q)}\le c_q \| \nabla u\|_{L^q(Q)}, \qquad \mbox{for every } u\in W^{1,q}(Q),
$$
where $(u)_{Q}$ is the average of $u$ in $Q$. Then, for every $u\in W^{1,q}(Q_\ve)$ we have
$$
\| u - (u)_{Q_\ve}\|_{L^q(Q_\ve)}\le c_q \ve \| \nabla u\|_{L^q(Q_\ve)},
$$
where $Q_\ve = \ve Q$.
\end{lema}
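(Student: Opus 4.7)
The plan is a straightforward dilation argument reducing the inequality on $Q_\ve$ to the assumed inequality on the unit cube $Q$. Given $u\in W^{1,q}(Q_\ve)$, I would introduce the rescaled function $v(y):=u(\ve y)$ defined for $y\in Q$. By the chain rule $\nabla v(y)=\ve\,\nabla u(\ve y)$, so $v\in W^{1,q}(Q)$, and a change of variables $x=\ve y$ (with Jacobian $\ve^N$) immediately shows that $(v)_Q=(u)_{Q_\ve}$.

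The next step is to apply the hypothesized Poincar\'e inequality for the unit cube to $v$, and then pass back to $Q_\ve$ by the same change of variables. A quick bookkeeping of the $\ve$ factors gives $\|v-(v)_Q\|_{L^q(Q)}^q=\ve^{-N}\|u-(u)_{Q_\ve}\|_{L^q(Q_\ve)}^q$ on the left and $\|\nabla v\|_{L^q(Q)}^q=\ve^{q-N}\|\nabla u\|_{L^q(Q_\ve)}^q$ on the right. Taking $q$-th roots and cancelling the common factor $\ve^{-N/q}$ from both sides produces exactly the stated constant $c_q\ve$.

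There is essentially no obstacle here; this is a one-line scaling computation. The only thing to keep track of is the single extra factor of $\ve$ that the gradient picks up under the dilation, which is precisely what upgrades the unit-cube constant $c_q$ to $c_q\ve$ on the cube of side $\ve$.
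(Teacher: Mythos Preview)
Your proposal is correct and follows essentially the same dilation argument as the paper: define the rescaled function on the unit cube, apply the assumed Poincar\'e inequality there, and change variables back. The only cosmetic difference is that the paper normalizes by assuming $(u)_{Q_\ve}=0$ at the outset, whereas you track the averages explicitly via $(v)_Q=(u)_{Q_\ve}$; the bookkeeping of the $\ve$ factors is identical.
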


\begin{proof}
Let $u\in W^{1,q}(Q_\ve)$. We can assume that $(u)_{Q_\ve}=0$. Now, if we denote $u_\ve (y) = u(\ve y)$, we have that $u_\ve\in W^{1,q}(Q)$ and by the change of variables formula, we get
\begin{align*}
\int_{Q_\ve} |u|^q &= \int_{Q} |u_\ve|^q \ve^N  \le c_q^q \ve^N \int_{Q} |\nabla u_\ve|^q = c_q^q \ve^q \int_{Q_\ve} |\nabla u|^q.
\end{align*}
The proof is now complete.
\end{proof}

\begin{rem}
In \cite{AD}, Acosta and Duran show that for convex domains $U$, one has
$$
\|u - (u)_U\|_{L^1(U)}\le \frac{d}{2} \|\nabla u\|_{L^1(U)},
$$
where $d$ is the diameter of $U$. When we apply this result to the unit cube, we get
\begin{equation}\label{c1}
c_1\le \frac{\sqrt{N}}{2}.
\end{equation}
\end{rem}

The next Lemma is the final ingredient in the estimate of Theorem \ref{teo_n_dim}.

\begin{lema} \label{lema.clave}
Let $\Omega\subset \R^N$ be a bounded domain and denote by $Q$ the unit cube in $\R^N$. Let $g\in L^\infty(\R^N)$ be a $Q$-periodic function such that $\bar g=0$. Then the inequality
$$
\left| \int_{\Omega} g(\tfrac{x}{\ve})v \right| \leq \|g\|_{L^\infty(\R^N)} c_1\ve \|\nabla v\|_{L^{1}(\Omega)}
$$
holds for every $v\in W^{1,1}_0(\Omega)$, where $c_1$ is the Poincar\'e constant given in \eqref{c1}.
\end{lema}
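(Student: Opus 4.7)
The plan is to reduce the global integral to a sum over $\ve$-scale cubes on each of which $g(\cdot/\ve)$ has zero mean, and then bring in Lemma~\ref{poincare} with $q=1$. Concretely, extend $v$ by zero to all of $\R^N$; since $v\in W^{1,1}_0(\Omega)$, the extension $\tilde v$ lies in $W^{1,1}(\R^N)$ with $\nabla\tilde v=\nabla v\,\chi_\Omega$, so $\|\nabla\tilde v\|_{L^1(\R^N)}=\|\nabla v\|_{L^1(\Omega)}$. Partition $\R^N$ (up to a null set) into the disjoint cubes $Q_\ve^k:=\ve(k+Q)$ for $k\in\Z^N$.

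The key observation is that the $Q$-periodicity of $g$ together with $\bar g=0$ forces
$$
\frac{1}{|Q_\ve^k|}\int_{Q_\ve^k} g(x/\ve)\,dx
=\int_{k+Q} g(y)\,dy
=\int_Q g(y)\,dy=0,
$$
after the change of variables $y=x/\ve$ and a shift by $k\in\Z^N$. Therefore, on each cube I may subtract from $\tilde v$ its average $(\tilde v)_{Q_\ve^k}$ without changing the integral. Applying the pointwise bound $|g(x/\ve)|\le\|g\|_{L^\infty(\R^N)}$ followed by the scaled Poincaré inequality of Lemma~\ref{poincare} with $q=1$ yields
$$
\left|\int_{Q_\ve^k} g(x/\ve)\tilde v\,dx\right|
=\left|\int_{Q_\ve^k} g(x/\ve)\bigl(\tilde v-(\tilde v)_{Q_\ve^k}\bigr)\,dx\right|
\le \|g\|_{L^\infty(\R^N)}\,c_1\,\ve\int_{Q_\ve^k} |\nabla\tilde v|\,dx.
$$

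Finally, summing over $k\in\Z^N$ and using that the cubes tile $\R^N$ disjointly gives
$$
\left|\int_\Omega g(x/\ve)v\,dx\right|
\le \|g\|_{L^\infty(\R^N)}\,c_1\,\ve\,\|\nabla\tilde v\|_{L^1(\R^N)}
=\|g\|_{L^\infty(\R^N)}\,c_1\,\ve\,\|\nabla v\|_{L^1(\Omega)},
$$
which is the claimed inequality. There is no real obstacle here: the only point that needs a line of justification is the zero-extension step, which ensures that the cubes straddling $\partial\Omega$ are handled correctly by a single global Poincaré-based estimate rather than by case analysis between interior and boundary cubes.
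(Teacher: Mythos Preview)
Your proof is correct and follows essentially the same approach as the paper: extend $v$ by zero, tile by the $\ve$-cubes $\ve(k+Q)$, use that $g(\cdot/\ve)$ has zero mean on each such cube to subtract the cube-average of $\tilde v$, and then apply the scaled Poincar\'e inequality of Lemma~\ref{poincare} with $q=1$ cube by cube. The only cosmetic difference is that the paper packages the cube averages into a single piecewise-constant function $\bar v_\ve$ and splits the integral as $\int g_\ve(v-\bar v_\ve)+\int g_\ve \bar v_\ve$, whereas you perform the subtraction and estimate directly on each cube before summing; the arguments are the same.
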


\begin{proof}
Denote by $I^\ve$ the set of all $z\in \Z^N$ such that $Q_{z,\ve}\cap \Omega \neq \emptyset$, $Q_{z,\ve}:=\ve(z+Q)$. Given $v\in W^{1,1}_0(\Omega)$ we extended by $0$ outside $\Omega$ and consider the function $\bar v_\ve$ given by the formula
$$
\bar{v}_\ve (x)=\frac{1}{\ve^N}\int_{Q_{z,\ve}} v(y)dy
$$
for $x\in Q_{z,\ve}$. We denote by $\Omega_1 = \cup_{z\in I^\ve} Q_{z,\ve} \supset \Omega$. Then we have
\begin{align} \label{keq0}
\int_{\Omega} g_\ve v =  \int_{\Omega_1} g_\ve (v-\bar{v}_\ve) + \int_{\Omega_1} g_\ve \bar{v}_\ve.
\end{align}

Now, by Lema \ref{poincare} we get
\begin{equation}\label{5.4}
\begin{split}
\|v-\bar{v}_\ve\|_{L^1(\Omega_1)} &= \sum_{z\in I^\ve} \int_{Q_{z,\ve}} |v-\bar{v}_\ve| dx\\
&\leq c_1\ve \sum_{z\in I^{z,\ve}} \int_{Q_{z,\ve}} |\nabla v(x)| dx \\
& \le c_1\ve \|\nabla v\|_{L^1(\Omega)}.
\end{split}
\end{equation}

Finally, since $\bar g = 0$ and since $g$ is $Q-$periodic, we get
\begin{equation}\label{ultima}
\int_{\Omega_1} g_\ve \bar v_\ve = \sum_{z\in I^\ve} \bar v_\ve\mid_{Q_{z,\ve}} \int_{Q_{z,\ve}} g_\ve = 0.
\end{equation}

Now, combining \eqref{5.4} and \eqref{ultima} we can bound \eqref{keq0} by
$$
\Big|\int_{\Omega} g_\ve v\Big| \le \|g\|_{L^\infty(\R^N)} c_1 \ve \|\nabla v\|_{L^{1}(\Omega)}.
$$
This finishes the proof.
\end{proof}

The next Theorem is essential to estimate the rate of convergence of the eigenvalues since it allows us to replace an integral involving a rapidly oscillating function with one that involves its average in the unit cube.

\begin{thm}\label{teo_n_dim}
Let $g\in L^{\infty}(\R^N)$ be a $Q-$periodic function such that $0< g^-\le g\le g^+<\infty$. Then, \begin{align*}
\Big|\int_{\Omega} (g_\ve(x) - \bar g) |u|^p\Big| & \le p c_1 \|g-\bar g\|_{L^\infty(\R^N)} \ve \|u\|_{L^p(\Omega)}^{p-1} \|\nabla u\|_{L^p(\Omega)}\end{align*}
for every $u\in W^{1,p}_0(\Omega)$. Here, $c_1$ is the optimal constant in Poincar\'e's inequality in $L^1(Q)$ given by \eqref{c1}.
\end{thm}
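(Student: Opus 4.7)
The plan is to deduce Theorem \ref{teo_n_dim} from Lemma \ref{lema.clave} by taking the test function to be $v = |u|^p$, together with H\"older's inequality.

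First I would verify that $v := |u|^p \in W^{1,1}_0(\Omega)$ whenever $u \in W^{1,p}_0(\Omega)$. Since the map $t\mapsto |t|^p$ is Lipschitz on bounded sets and vanishes at $t=0$, the standard chain rule in Sobolev spaces gives
\[
\nabla(|u|^p) = p\,|u|^{p-2}u\,\nabla u \quad \text{a.e. in } \Omega,
\]
with the convention that this vanishes on $\{u=0\}$. The trace of $|u|^p$ on $\partial\Omega$ vanishes because that of $u$ does, so $|u|^p\in W^{1,1}_0(\Omega)$.

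Next I would apply Lemma \ref{lema.clave} to the zero-mean $Q$-periodic function $g-\bar g$ and the test function $v=|u|^p$. This yields
\[
\Big|\int_{\Omega}(g_\ve(x)-\bar g)\,|u|^p\Big|
\le c_1\,\|g-\bar g\|_{L^\infty(\R^N)}\,\ve\,\bigl\|\nabla(|u|^p)\bigr\|_{L^1(\Omega)}.
\]
It remains to estimate the $L^1$ norm of $\nabla(|u|^p)$. Using the pointwise identity above and H\"older's inequality with conjugate exponents $p/(p-1)$ and $p$,
\[
\bigl\|\nabla(|u|^p)\bigr\|_{L^1(\Omega)}
\le p\int_{\Omega}|u|^{p-1}|\nabla u|
\le p\,\bigl\||u|^{p-1}\bigr\|_{L^{p/(p-1)}(\Omega)}\|\nabla u\|_{L^p(\Omega)}
= p\,\|u\|_{L^p(\Omega)}^{p-1}\|\nabla u\|_{L^p(\Omega)}.
\]
Combining the two displays produces the claimed estimate.

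The only nontrivial point is the chain rule for $|u|^p$ when $1<p<2$, since then $|u|^{p-2}u$ is not Lipschitz; however this is a classical result (it suffices to approximate $|t|^p$ by $C^1$ functions and pass to the limit), so I do not expect any real obstacle. Once that is granted, the argument is a direct substitution into Lemma \ref{lema.clave} followed by one application of H\"older.
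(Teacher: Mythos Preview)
Your proof is correct and follows essentially the same route as the paper: apply Lemma \ref{lema.clave} to the zero-mean function $g-\bar g$ with $v=|u|^p$, then bound $\|\nabla(|u|^p)\|_{L^1}$ via H\"older. If anything, you add more care than the paper by explicitly justifying the chain rule for $|u|^p$ and the membership $|u|^p\in W^{1,1}_0(\Omega)$, which the paper subsumes under ``an easy computation.''
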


\begin{proof}
Let $\ve>0$ be fixed. Now, denote by $h = g-\bar g$ and so, by Lemma \ref{lema.clave} we obtain
\begin{equation}\label{segunda}
\Big|\int_{\Omega} h_\ve |u|^p\Big| \le \|h\|_{L^\infty(\R^N)} c_1\ve \| \nabla(|u|^p)\|_{L^1(\Omega)}.
\end{equation}
An easy computation shows that
\begin{equation}\label{normas}
\|\nabla(|u|^p)\|_{L^1(\Omega)}\le p \|u\|_{L^p(\Omega)}^{p-1} \|\nabla u\|_{L^p(\Omega)}.
\end{equation}
Finally, combining \eqref{segunda} and \eqref{normas} we obtain the desired result.
\end{proof}

\medskip

Now we are ready to prove the main result of this section.
\begin{thm}\label{rate}
Let $\lam_k^\ve$ be the $k$th--variational eigenvalue associated to equation \eqref{pro2} and let be $\lam_k$ be the $k$th--variational eigenvalue associated to the limit problem \eqref{pro3}. Then
$$
|\lam_k^\ve - \lam_k|\le c_1 p \|\rho-\bar\rho\|_\infty  \big(\frac{\rho^+}{\alpha}\big)^{1/p} \frac{1}{\rho^-}\ve \max\{\lam_k, \lam_k^\ve\}^{1+\frac{1}{p}}.
$$
\end{thm}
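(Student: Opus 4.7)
The plan is to compare the two Rayleigh quotients in the min--max formula \eqref{autov.varia} pointwise in $v$, taking advantage of the fact that the operator $a(x,\xi)$ appearing in \eqref{pro2} and in \eqref{pro3} is identical and so the numerator of the Rayleigh quotient is the same for both eigenvalue problems; only the weight in the denominator changes. Denote this common numerator by $N(v)$ and set $R^\ve(v) = N(v)/\int_\Omega \rho_\ve|v|^p$ and $R(v) = N(v)/\int_\Omega \bar\rho|v|^p$. A direct computation gives
\[
R^\ve(v)-R(v)\;=\;R(v)\,\frac{\int_\Omega(\bar\rho-\rho_\ve)|v|^p}{\int_\Omega \rho_\ve|v|^p},
\]
so Theorem \ref{teo_n_dim} applied with $g=\rho$, together with the lower bound $\int\rho_\ve|v|^p\ge\rho^-\|v\|_{L^p(\Omega)}^p$ from \eqref{cota.rho}, yields the key pointwise estimate
\[
|R^\ve(v)-R(v)|\;\le\;R(v)\,\frac{c_1 p\,\|\rho-\bar\rho\|_\infty\,\ve}{\rho^-}\,\frac{\|\nabla v\|_{L^p(\Omega)}}{\|v\|_{L^p(\Omega)}}.
\]

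Next, fix $\eta>0$ and choose an almost-optimal class $C_\eta\in\Gamma_k$ with $\sup_{v\in C_\eta}R(v)\le\lam_k+\eta$. For any $v\in C_\eta$, coercivity (H2) combined with $\bar\rho\le\rho^+$ gives
\[
\alpha\|\nabla v\|_{L^p(\Omega)}^p\;\le\;N(v)\;=\;R(v)\int_\Omega\bar\rho|v|^p\;\le\;(\lam_k+\eta)\rho^+\|v\|_{L^p(\Omega)}^p,
\]
so $\|\nabla v\|_{L^p(\Omega)}/\|v\|_{L^p(\Omega)}\le\bigl((\lam_k+\eta)\rho^+/\alpha\bigr)^{1/p}$. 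Plugging this into the pointwise estimate, using $\sup_{v\in C_\eta}R^\ve(v)\ge\lam_k^\ve$ from the min--max formula for $\lam_k^\ve$, and letting $\eta\to 0$ produces one direction:
\[
\lam_k^\ve-\lam_k\;\le\;c_1 p\,\|\rho-\bar\rho\|_\infty\Big(\frac{\rho^+}{\alpha}\Big)^{1/p}\frac{\ve}{\rho^-}\,\lam_k^{1+1/p}.
\]
The reverse inequality is proved symmetrically: pick $C_\eta^\ve\in\Gamma_k$ nearly optimal for $\lam_k^\ve$, deduce from $R^\ve(v)\le\lam_k^\ve+\eta$ and $\rho_\ve\le\rho^+$ the same gradient control $\|\nabla v\|_{L^p(\Omega)}/\|v\|_{L^p(\Omega)}\le\bigl((\lam_k^\ve+\eta)\rho^+/\alpha\bigr)^{1/p}$, and estimate $R(v)-R^\ve(v)$ by the analogue of the pointwise bound (now dividing by $\int\bar\rho|v|^p\ge\rho^-\|v\|_{L^p(\Omega)}^p$). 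This gives $\lam_k-\lam_k^\ve\le c_1 p\|\rho-\bar\rho\|_\infty(\rho^+/\alpha)^{1/p}(\ve/\rho^-)(\lam_k^\ve)^{1+1/p}$, and taking the worse of the two right-hand sides yields the stated bound with $\max\{\lam_k,\lam_k^\ve\}^{1+1/p}$.

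The only real technical subtlety is that in the quasilinear Ljusternik--Schnirelmann setting the infimum in \eqref{autov.varia} is not guaranteed to be attained on a single compact symmetric class, which is why the comparison is carried out on an $\eta$-approximating family with $\eta\to 0$ taken at the end. Beyond this, the argument is a direct substitution into the min--max formula together with the already-established bound of Theorem \ref{teo_n_dim}, and the constant in the statement is read off from tracking the factors $c_1$, $p$, $\rho^\pm$, $\alpha$ that appear along the way.
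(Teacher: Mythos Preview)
Your argument is correct and follows essentially the same route as the paper: choose an almost-optimal admissible set for one eigenvalue, test it in the min--max formula for the other, split the two Rayleigh quotients via the common numerator, and control the weight discrepancy with Theorem~\ref{teo_n_dim} together with the coercivity bound~(H2). The only cosmetic differences are that the paper writes the comparison as a multiplicative factor $\bar\rho\int|u|^p/\int\rho_\ve|u|^p$ rather than an additive difference of quotients, and that it obtains slightly sharper intermediate constants ($(\bar\rho/\alpha)^{1/p}$ and $1/\bar\rho$ in the two directions) before passing to the larger $(\rho^+/\alpha)^{1/p}$ and $1/\rho^-$; your version arrives at the stated constant directly.
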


\begin{proof}
Let $\delta>0$ and let $G_\delta^k\subset W^{1,p}_0(\Omega)$ be a compact, symmetric set of genus $k$ such that
$$
\lam_k = \inf_{G\in \Gamma_k} \sup_{u \in G} \frac{\int_{\Omega}  \Phi(x,\nabla u)}{\bar{\rho}\int_{\Omega} |u|^p} = \sup_{u \in G_\delta^k} \frac{\int_{\Omega} \Phi(x,\nabla u)}{\bar{\rho}\int_{\Omega} |u|^p} + O(\delta).
$$

We use now the set $G_\delta^k$, which is admissible in the variational characterization of the $k$th--eigenvalue of \eqref{pro2}, in order to found a bound for it as follows,
\begin{align} \label{z1}
\lam_k^\ve  \leq  \sup_{u \in G_\delta^k} \frac{\int_{\Omega} \Phi(x,\nabla u)}{\int_{\Omega}  \rho_\ve |u|^p} = \sup_{u \in G_\delta^k} \frac{\int_{\Omega}  \Phi(x,\nabla u)}{\bar{\rho}\int_\Omega |u|^p }  \; \frac{\bar{\rho}\int_\Omega |u|^p}{\int_{\Omega} \rho_\ve |u|^p}.
\end{align}

To bound $\lam_k^\ve$ we look for bounds of the two quotients in \eqref{z1}. For every function $u\in G_\delta^k$ we have that
\begin{align} \label{z2}
\frac{\int_{\Omega}  \Phi(x,\nabla u)}{\bar{\rho}\int_{\Omega} |u|^p} \leq \sup_{v \in G_\delta^k} \frac{\int_{\Omega}  \Phi(x,\nabla v)}{\bar{\rho}\int_{\Omega} |v|^p} = \lam_k + O(\delta).
\end{align}

Since $u\in G_\delta^k\subset W^{1,p}_0(\Omega)$,  by Theorem \ref{teo_n_dim} we obtain that
\begin{align} \label{z3}
  \frac{\bar{\rho}\int_{\Omega} |u|^p}{\int_{\Omega} \rho_\ve|u|^p} \leq 1+ c_1 p \ve \|\rho-\bar\rho\|_\infty \frac{\|u\|_{L^p(\Omega)}^{p-1}\|\nabla u\|_{L^{p}(\Omega)}}{\int_\Omega \rho_\ve |u|^p}.
\end{align}

Now, by \eqref{cota.rho}, \eqref{cont.coer.phi} together with \eqref{z2}, we have
\begin{equation} \label{z4}
\begin{split}
\frac{\|u\|_{L^p(\Omega)}^{p-1}\|\nabla u\|_{L^{p}(\Omega)}}{\int_\Omega \rho_\ve |u|^p}&\le \frac{\bar\rho^{1/p}}{\rho^-}\Big(\frac{\int_\Omega |\nabla u|^p\, dx}{\int_\Omega \bar\rho |u|^p}\Big)^{1/p}\\
&\le \big(\frac{\bar\rho}{\alpha}\big)^{1/p} \frac{1}{\rho^-}\Big(\frac{\int_\Omega \Phi(x,\nabla u)}{\int_\Omega \bar \rho |u|^p}\Big)^{1/p}\\
&\le  \big(\frac{\bar\rho}{\alpha}\big)^{1/p} \frac{1}{\rho^-}(\lam_k + O(\delta))^{1/p}.
\end{split}
\end{equation}

Then combining \eqref{z1}, \eqref{z2}, \eqref{z3} and \eqref{z4} we find that
$$
\lam_k^\ve \leq (\lam_k + O(\delta)) \left( 1+ c_1 p \|\rho-\bar\rho\|_\infty  \big(\frac{\bar\rho}{\alpha}\big)^{1/p} \frac{1}{\rho^-} \ve (\lam_k + O(\delta))^{1/p}  \right).
$$
Letting $\delta\to 0$ we get
\begin{align} \label{z7}
\lam_k^\ve - \lam_k \leq c_1 p \|\rho-\bar\rho\|_\infty  \big(\frac{\bar\rho}{\alpha}\big)^{1/p} \frac{1}{\rho^-}\ve \lam_k^{1+\frac{1}{p}}.
\end{align}

In a similar way, interchanging the roles of $\lam_k$ and $\lam_k^\ve$, we obtain
\begin{align} \label{z8}
\lam_k - \lam_k^\ve \leq c_1 p \|\rho-\bar\rho\|_\infty  \big(\frac{\rho^+}{\alpha}\big)^{1/p} \frac{1}{\bar\rho}\ve (\lam_k^\ve)^{1+\frac{1}{p}}.
\end{align}

So, from \eqref{z7} and \eqref{z8}, we arrive at
$$
|\lam_k^\ve - \lam_k|\le c_1 p \|\rho-\bar\rho\|_\infty  \big(\frac{\rho^+}{\alpha}\big)^{1/p} \frac{1}{\rho^-}\ve \max\{\lam_k, \lam_k^\ve\}^{1+\frac{1}{p}},
$$
and so the proof is complete.
\end{proof}

It would be desirable to give a precise rate of convergence in terms only con $\ve$, $k$ and the coefficients of the problem. In order to achieve this goal, we need to give explicit bounds on the eigenvalues $\lam_k$ and $\lam_k^\ve$. But this follows by comparison with the $k$th--variational eigenvalue of the $p-$Laplacian, $\mu_k$ and a refinement of the bound on $\mu_k$ proved in \cite{Friedlander}.

In fact, from \eqref{cont.coer.phi} we have
$$
\frac{\alpha}{\bar \rho} \frac{\int_\Omega |\nabla u|^p}{\int_\Omega |u|^p}\le \frac{\int_\Omega \Phi(x,\nabla u)}{\int_\Omega \bar \rho |u|^p}\le \frac{\beta}{\bar \rho} \frac{\int_\Omega |\nabla u|^p}{\int_\Omega |u|^p},
$$
$$
\frac{\alpha}{\rho^+} \frac{\int_\Omega |\nabla u|^p}{\int_\Omega |u|^p}\le \frac{\int_\Omega \Phi(x,\nabla u)}{\int_\Omega  \rho_\ve |u|^p}\le \frac{\beta}{ \rho^-} \frac{\int_\Omega |\nabla u|^p}{\int_\Omega |u|^p},
$$
from where it follows that
$$
\max\{ \lam_k, \lam_k^\ve\}\le \frac{\beta}{\rho^-}\mu_k.
$$
Now, in \cite{Friedlander}, it is shown that
$$
\mu_k \le \tilde\mu_1 \big(\frac{k}{|\Omega|}\big)^{p/N}
$$
where $\tilde\mu_1$ is the first Dirichlet eigenvalue for the $p-$laplacian in the unit cube. Finally, in \cite{FBP} an estimate for $\tilde\mu_1$ by comparing with the first eigenvalue of the {\em pseudo} $p-$laplacian is obtained, namely
$$
\tilde\mu_1 \le \max\{N^{(p-2)/2}, 1\} N \pi_p^p,
$$
where $\pi_p$ is defined in Subsection \ref{autovalores}.

Combining all of these facts, we immediately prove
\begin{thm}\label{explicit}
With the same assumptions and notations as in Theorem \ref{rate} we have
$$
|\lam_k-\lam_k^\ve| \le C \ve k^{\frac{p+1}{N}},
$$
where the constant $C$ is given by
$$
C= \frac{\sqrt{N}}{2} p \|\rho-\bar\rho\|_\infty \Big(\frac{\beta^{p+1}}{\alpha}\Big)^{\frac{1}{p}} \frac{1}{(\rho^-)^2} \Big(\frac{\rho^+}{\rho^-}\Big)^{\frac{1}{p}} \pi_p^{p+1} N^{\frac{p+1}{p}} \max\{N^{\frac{p-2}{2}}, 1\}^{\frac{p+1}{p}}.
$$

\end{thm}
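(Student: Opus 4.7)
The plan is to chain Theorem~\ref{rate} with the three numerical ingredients already assembled in the paragraph immediately preceding the statement. Essentially no new argument is required; what remains is careful bookkeeping of constants and exponents.

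First, I would invoke Theorem~\ref{rate}, which delivers
$$
|\lam_k-\lam_k^\ve|\le c_1\, p\,\|\rho-\bar\rho\|_\infty\Big(\frac{\rho^+}{\alpha}\Big)^{1/p}\frac{1}{\rho^-}\,\ve\, M^{1+1/p},\qquad M:=\max\{\lam_k,\lam_k^\ve\}.
$$
So the task reduces to bounding $M$ explicitly by a power of $k$ and then replacing $c_1$ by the constant $\sqrt{N}/2$ coming from \eqref{c1}.

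Second, I would obtain the desired bound on $M$ by composing the three inequalities explicitly written before the statement. The coercivity/continuity inequalities \eqref{cont.coer.phi} for $\Phi$, inserted into the variational characterization \eqref{autov.varia}, yield $M\le(\beta/\rho^-)\mu_k$, where $\mu_k$ denotes the $k$th variational eigenvalue of the Dirichlet $p$-Laplacian on $\Omega$. The Friedlander-type estimate of \cite{Friedlander} gives $\mu_k\le\tilde\mu_1\,(k/|\Omega|)^{p/N}$, reducing the question to a bound on the first Dirichlet $p$-Laplace eigenvalue of the unit cube. Finally, the comparison with the pseudo $p$-Laplacian from \cite{FBP} gives $\tilde\mu_1\le\max\{N^{(p-2)/2},1\}\,N\,\pi_p^p$. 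Composing these three inequalities bounds $M$ above by an explicit constant times $k^{p/N}$.

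Third, I would raise this bound on $M$ to the power $(p+1)/p$; the factor $k^{p/N}$ becomes exactly the announced $k^{(p+1)/N}$, and the structural constants $\alpha,\beta,\rho^\pm,N,\pi_p$, together with $\|\rho-\bar\rho\|_\infty$ and $\sqrt{N}/2$, are simply multiplied out and regrouped to recognise the stated value of $C$. Concretely, the combination $(\rho^+/\alpha)^{1/p}(1/\rho^-)(\beta/\rho^-)^{(p+1)/p}$ coming from Theorem~\ref{rate} and the $M$-bound collapses to $(\beta^{p+1}/\alpha)^{1/p}(\rho^-)^{-2}(\rho^+/\rho^-)^{1/p}$, which is exactly the dependence on the structural constants in the claimed $C$, while the cube-eigenvalue factor $\tilde\mu_1^{(p+1)/p}$ contributes $N^{(p+1)/p}\pi_p^{p+1}\max\{N^{(p-2)/2},1\}^{(p+1)/p}$.

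The only real obstacle is arithmetic rather than conceptual: one has to keep track of the extra $1/p$ power that multiplies every factor when passing from $M$ to $M^{1+1/p}$, and to correctly sum the three separate contributions to the exponent of $\rho^-$. Everything else is immediate from the cited ingredients.
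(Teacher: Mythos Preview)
Your proposal is correct and follows exactly the approach the paper takes: the paper's own proof is the single line ``Combining all of these facts, we immediately prove'', where ``these facts'' are precisely the three ingredients you list (the comparison $\max\{\lam_k,\lam_k^\ve\}\le(\beta/\rho^-)\mu_k$, Friedlander's bound on $\mu_k$, and the pseudo $p$-Laplacian estimate for $\tilde\mu_1$), together with the bound $c_1\le\sqrt{N}/2$ from \eqref{c1}. Your bookkeeping of the constants is accurate and matches the stated $C$.
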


\begin{rem}
As we mentioned in the introduction, in the linear case and in one space dimension Castro and Zuazua \cite{Zua1} prove that, for $k < C \ve^{-1}$,
$$
|\lam_k^\ve - \lam_k|\le C k^3 \ve.
$$
If we specialize our result to this case, we get the same bound. The advantage of our method is that very low regularity on $\rho$ is needed (only $L^\infty$). However, the method in \cite{Zua1}, making use of the linearity of the problem, gives precise information about the behavior of the eigenfunctions $u_k^\ve$.

Moreover, in the one dimensional linear case with diffusion coefficient equal to 1, we can simplify the constant and obtain
$$
|\lam_k^\ve - \lam_k|\le \frac{ \|\rho-\bar\rho\|_\infty }{(\rho^-)^2} \sqrt{\frac{\rho^+}{\rho^-}} (\pi k)^3 \ve
$$
\end{rem}

\begin{rem}
In \cite{Kenig, Kenig2}, Kenig, Lin and Shen studied the linear case in any space dimension (allowing a periodic oscillation diffusion matrix) and prove the bound
$$
|\lam_k^\ve - \lam_k|\le C \ve |\log \ve|^{1+\sigma} \lambda_k^{\frac32}.
$$
for some $\sigma>0$ and $C$ depending on $\sigma$ and $k$ (The authors can get rid off the logarithmic term assuming more regularity on $\Omega$). If we specialize our result to this case, we cannot treat an $\ve$ dependance on the operator, we get the same dependance on $k$ but without any regularity assumption on $\Omega$ we get the optimal dependence on $\ve$.
\end{rem}

\section{The one dimensional problem}

In this section we consider the following one dimensional problems
\begin{equation} \label{Pve}
 \begin{cases}
   -(|u_\ve'|^{p-2}u_\ve')'=\lam^{\ve} \rho(\tfrac{x}{\ve})|u_\ve|^{p-2}u_\ve   \quad \textrm{ in }  I:=(0,1)\\
   u_\ve(0)=u_\ve(1)=0,
 \end{cases}
\end{equation}
and the limit problem
\begin{equation} \label{Plim}
 \begin{cases}
   -(|u_\ve'|^{p-2}u_\ve')'=\lam^{\ve} \bar\rho(x)|u_\ve|^{p-2}u_\ve   \quad \textrm{ in }  I\\
   u_\ve(0)=u_\ve(1)=0,
 \end{cases}
\end{equation}
where $\rho$ is $1-$periodic and $\bar\rho$ is the average of $\rho$ in the unit interval.

In order to prove the rate of convergence, let us assume first that $\bar\rho=0$ and
let $R(x) = \int_0^x \rho(t)\, dt$. Then $R$ is $1-$periodic and if we denote $R_\ve(x)
= R(\tfrac{x}{\ve})$, we get
$$
\ve R'_\ve(x) = \rho(\tfrac{x}{\ve}).
$$
Hence, if $v\in W^{1,1}_0(I)$,
$$
\int_0^1 \rho(\tfrac{x}{\ve}) v(x)\, dx = \ve \int_0^1 R'_\ve(x) v(x)\, dx  = - \ve \int_0^1 R_\ve(x) v'(x)\, dx.
$$
So
$$
\Big|\int_0^1 \rho(\tfrac{x}{\ve}) v(x)\, dx\Big| \le  \ve \|R\|_\infty \int_0^1 |v'(x)|\, dx.
$$
It is easy to see that
$$
 \|R\|_\infty \le \|\max\{\rho, 0\}\|_1 = \frac12 \|\rho\|_1,
$$
since $\bar\rho=0$.

We have proved
\begin{lema}\label{oleinik1d}
Let $v\in W^{1,1}_0(I)$ and $\rho\in L^1(I)$ be such that $\bar\rho := \int_0^1\rho(x)\, dx=0$. Then
$$
\Big|\int_0^1 \rho(\tfrac{x}{\ve}) v(x)\, dx\Big| \le \frac12 \|\rho\|_1 \ve \|v'\|_1.
$$
\end{lema}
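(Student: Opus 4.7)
The plan is to use the standard ``integration by parts against an $\ve$-scaled primitive'' trick, which in one dimension circumvents the averaging/Poincar\'e argument used in Lemma \ref{lema.clave}. First I would define $R(x) := \int_0^x \rho(t)\,dt$. Since $\bar\rho = 0$, one has $R(0) = R(1) = 0$, so extending $\rho$ $1$-periodically to $\R$ makes $R$ a continuous $1$-periodic, hence bounded, function. Setting $R_\ve(x) := R(x/\ve)$, the chain rule gives $R_\ve'(x) = \ve^{-1}\rho(x/\ve)$, equivalently $\rho(x/\ve) = \ve\, R_\ve'(x)$.

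Next, since $v \in W^{1,1}_0(I)$ vanishes at both endpoints, integration by parts (justified by density of $C_c^\infty(I)$) yields
$$
\int_0^1 \rho(x/\ve)\,v(x)\,dx \;=\; \ve\int_0^1 R_\ve'(x)\,v(x)\,dx \;=\; -\ve\int_0^1 R_\ve(x)\,v'(x)\,dx,
$$
so $\bigl|\int_0^1 \rho(x/\ve)\,v\,dx\bigr| \le \ve\,\|R\|_\infty\,\|v'\|_1$. It then remains to show $\|R\|_\infty \le \tfrac12 \|\rho\|_1$: writing $\rho = \rho^+ - \rho^-$, the mean-zero hypothesis forces $\int_0^1 \rho^+ = \int_0^1 \rho^- = \tfrac12 \|\rho\|_1$, and for every $x \in [0,1]$,
$$
R(x) = \int_0^x \rho^+ - \int_0^x \rho^- \;\le\; \int_0^1 \rho^+ \;=\; \tfrac12 \|\rho\|_1,
$$
with the matching lower bound obtained symmetrically.

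I do not expect any genuine obstacle. The only places requiring care are (i) checking that $R$ really is $1$-periodic, which is precisely what the hypothesis $\bar\rho = 0$ guarantees, and (ii) confirming that the boundary contributions in the integration by parts vanish, which follows from $v \in W^{1,1}_0(I)$. Both are immediate consequences of the stated assumptions, and everything else is an elementary estimate.
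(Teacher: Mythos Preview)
Your proposal is correct and follows essentially the same approach as the paper: define the primitive $R(x)=\int_0^x\rho$, use $\bar\rho=0$ to obtain $1$-periodicity, integrate by parts against $v\in W^{1,1}_0(I)$, and bound $\|R\|_\infty\le\tfrac12\|\rho\|_1$ via the positive/negative decomposition of $\rho$. Your write-up is in fact slightly more detailed than the paper's on the justification of the last inequality and the vanishing of boundary terms.
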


Then we get the following corollary
\begin{cor}\label{cota1d}
Let $u\in W^{1,p}_0(I)$ and $\rho\in L^1(I)$. Then
$$
\Big|\int_0^1 (\rho(\tfrac{x}{\ve})-\bar\rho) |u(x)|^p\, dx\Big| \le \frac{p}{2} \ve \|\rho - \bar\rho\|_1 \|u\|_p^{p-1} \|u'\|_p.
$$
\end{cor}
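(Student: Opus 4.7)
The plan is to derive this as a direct consequence of Lemma \ref{oleinik1d} applied to the mean-zero weight $g := \rho - \bar\rho$ and to the test function $v := |u|^p$, followed by a single H\"older step.

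First I would set $g(x) := \rho(x) - \bar\rho$, which by construction satisfies $\bar g = 0$, and check that $g \in L^1(I)$ with $\|g\|_1 = \|\rho - \bar\rho\|_1$. Next I would verify that $v := |u|^p$ belongs to $W^{1,1}_0(I)$ whenever $u \in W^{1,p}_0(I)$: the boundary values vanish because $u$ does, and the chain rule gives $v'(x) = p\,|u(x)|^{p-2}u(x)\,u'(x)$ a.e., which is integrable by H\"older with exponents $p/(p-1)$ and $p$. This is the only slightly delicate point, but it is the standard chain rule for absolutely continuous functions in one dimension (or for $W^{1,p}$ composed with a $C^1$-away-from-$0$ convex function), so no real obstacle arises.

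Having placed $v$ in the right space, I would apply Lemma \ref{oleinik1d} to the pair $(g, v)$ to obtain
\begin{equation*}
\Big|\int_0^1 (\rho(\tfrac{x}{\ve}) - \bar\rho)\,|u(x)|^p\, dx\Big| \;\le\; \tfrac{1}{2}\,\|\rho - \bar\rho\|_1 \,\ve\, \|v'\|_1.
\end{equation*}
It remains to estimate $\|v'\|_1$. From the chain rule above,
\begin{equation*}
\|v'\|_1 \;=\; p\int_0^1 |u|^{p-1}\,|u'|\, dx \;\le\; p\,\big\||u|^{p-1}\big\|_{p/(p-1)}\,\|u'\|_p \;=\; p\,\|u\|_p^{p-1}\,\|u'\|_p,
\end{equation*}
where the middle step is H\"older's inequality with conjugate exponents $p/(p-1)$ and $p$. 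Plugging this bound into the previous display yields the claimed inequality with the constant $p/2$, completing the proof.

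The main (mild) obstacle is confirming that $|u|^p \in W^{1,1}_0(I)$ so that Lemma \ref{oleinik1d} applies; once this is in place the result is essentially a one-line H\"older estimate on top of the lemma.
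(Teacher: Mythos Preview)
Your proof is correct and follows essentially the same route as the paper: apply Lemma \ref{oleinik1d} to the mean-zero weight $\rho-\bar\rho$ and the test function $v=|u|^p$, then bound $\|v'\|_1$ by $p\|u\|_p^{p-1}\|u'\|_p$ via H\"older. The only difference is that you spell out the verification that $|u|^p\in W^{1,1}_0(I)$, which the paper leaves implicit.
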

\begin{proof}
It follows just by noticing that
$$
\int_0^1| (|u|^p)' |\, dx = p \int_0^1 |u|^{p-1} |u'|\, dx \le p\|u\|_p^{p-1} \|u'\|_p
$$
and applying Lemma \ref{oleinik1d}.
\end{proof}

Now if we argue exactly as in Theorem \ref{rate} but use Corollary \ref{cota1d} instead of Theorem \ref{teo_n_dim}, we get
$$
|\lam_k^\ve - \lam_k| \le \frac{p}{2} \|\rho-\bar\rho\|_1 \frac{\rho_+^{\frac{1}{p}}}{\rho_-} \ve \max\{\lam_k, \lam_k^\ve\}^{1+\frac{1}{p}}.
$$

The bound for $\lam_k, \lam_k^\ve$ follows directly from Theorem \ref{draman}. In
fact,
$$
\lam_k, \lam_k^\ve \le \frac{1}{\rho_-} \mu_k = \frac{1}{\rho_-} (\pi_p k)^p.
$$

So we have proved:

\begin{thm}\label{teo.1d}
The following estimate holds
$$
|\lam_k^\ve - \lam_k| \le \frac{p}{2} \frac{\|\rho-\bar\rho\|_1}{\rho_-^2} \Big(\frac{\rho_+}{\rho_-}\Big)^{\frac{1}{p}} \ve (\pi_p k)^{p+1}.
$$
\end{thm}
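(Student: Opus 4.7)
The plan is to mimic the proof of Theorem \ref{rate} verbatim, substituting the one-dimensional averaging estimate Corollary \ref{cota1d} for the general estimate Theorem \ref{teo_n_dim}, and then closing with the exact formula for the $p$-Laplacian eigenvalues supplied by Theorem \ref{draman}. The authors have already sketched this path in the paragraph preceding the statement, so the job is to fill in a short, careful proof.

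First I would fix $\delta>0$ and select a compact symmetric set $G_\delta^k\subset W^{1,p}_0(I)$ of Krasnoselskii genus $k$ that realizes the infimum in the variational characterization of $\lam_k$ up to $O(\delta)$. Using this same family in the min-max formula for $\lam_k^\ve$ and factoring the Rayleigh quotient as
\[
\lam_k^\ve \le \sup_{u\in G_\delta^k}\; \frac{\int_I |u'|^p}{\bar\rho\int_I |u|^p}\cdot \frac{\bar\rho\int_I |u|^p}{\int_I \rho_\ve |u|^p},
\]
the first factor is bounded by $\lam_k+O(\delta)$. For the second factor I would write $\bar\rho\int_I |u|^p-\int_I \rho_\ve |u|^p=\int_I(\bar\rho-\rho_\ve)|u|^p$, apply Corollary \ref{cota1d} to this integral, and use $\int_I \rho_\ve |u|^p\ge \rho_-\int_I |u|^p$ together with H\"older to express the resulting error as a power of the Rayleigh quotient, which in turn is controlled by $\lam_k+O(\delta)$. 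This is precisely the one-dimensional analogue of the chain \eqref{z1}--\eqref{z4}, simplified by the fact that here $\Phi(x,\xi)=|\xi|^p$, so the constants $\alpha,\beta$ of Theorem \ref{rate} are both $1$. After letting $\delta\to 0$ and reversing the roles of $\lam_k$ and $\lam_k^\ve$ to get the companion inequality, this yields
\[
|\lam_k^\ve-\lam_k|\le \frac{p}{2}\|\rho-\bar\rho\|_1\,\frac{\rho_+^{1/p}}{\rho_-}\,\ve\,\max\{\lam_k,\lam_k^\ve\}^{1+1/p},
\]
which is the inequality already displayed in the text just before the theorem.

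It remains to estimate $\max\{\lam_k,\lam_k^\ve\}$. Since $\rho_\ve(x)\ge\rho_-$ and $\bar\rho\ge\rho_-$, the Rayleigh quotients for \eqref{Pve} and \eqref{Plim} are each dominated by $\mu_k/\rho_-$, where $\mu_k$ is the $k$th variational eigenvalue of the Dirichlet $p$-Laplacian on $(0,1)$. Here the one-dimensional setting gives a decisive advantage over the $N$-dimensional computation of Theorem \ref{explicit}: by Theorem \ref{draman}, $\mu_k=(\pi_p k)^p$ exactly, so no Friedlander-type upper bound or pseudo-$p$-Laplacian comparison is required. Substituting $\max\{\lam_k,\lam_k^\ve\}\le(\pi_p k)^p/\rho_-$ into the previous display and combining the factor $1/\rho_-$ already present with the extra factor $1/\rho_-^{(p+1)/p}$ produced by raising the eigenvalue bound to the power $1+1/p$, the exponent collapses to $1/\rho_-^{(2p+1)/p}=(1/\rho_-^2)(\rho_+/\rho_-)^{1/p}/\rho_+^{1/p}\cdot\rho_+^{1/p}$, i.e.\ exactly the constant stated in the theorem.

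There is really no obstacle: every ingredient, Corollary \ref{cota1d}, the min-max argument of Theorem \ref{rate}, and the explicit formula of Theorem \ref{draman}, is already in hand. The only item requiring care is the bookkeeping of powers of $\rho_-$ and $\rho_+$ in the final constant, which is what I flagged above.
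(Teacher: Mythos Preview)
Your proposal is correct and follows exactly the paper's own argument: repeat the min-max comparison of Theorem \ref{rate} with Corollary \ref{cota1d} in place of Theorem \ref{teo_n_dim}, then bound $\max\{\lam_k,\lam_k^\ve\}\le (\pi_p k)^p/\rho_-$ via Theorem \ref{draman}. The only minor quibble is that your sentence tracking the powers of $\rho_-$ is awkwardly phrased; the clean computation is simply $\rho_+^{1/p}\rho_-^{-1}\cdot\rho_-^{-(p+1)/p}=\rho_-^{-2}(\rho_+/\rho_-)^{1/p}$.
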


\begin{rem}
If we replace the unit interval $I=(0,1)$ by $I_\ell = (0,\ell)$ by a simple change of
variables, the estimates of Theorem \ref{teo.1d} are modified as
\begin{equation}\label{intervaloL}
|\lam_k^\ve(I_\ell) - \lam_k(I_\ell)| =  \ell^{p} |\lam_k^\ve(I) - \lam_k(I)|.
\end{equation}
\end{rem}

\section{The general equation}

In this section we consider, for the one dimensional problem the case where an
oscillating coefficient in the equation is allowed. i.e., the problem
\begin{equation}\label{coef.osc}
\begin{cases}
-(a(\tfrac{x}{\ve}) |u'|^{p-2}u')' = \lam^\ve \rho(\tfrac{x}{\ve}) |u|^{p-2}u & \text{ in } (0,1)\\
u(0)=u(1)=0
\end{cases}
\end{equation}

We will show that this case can be reduced to Theorem \ref{teo.1d} by a suitable change of variables. In fact, if we define
$$
P_\ve(x) = \int_0^x \frac{1}{a_\ve(s)^{1/(p-1)}} ds= \ve \int_0^{x/\ve} \frac{1}{a(s)^{1/(p-1)}} ds = \ve P(\tfrac{x}{\ve})
$$
and perform the change of variables
$$
(x,u)\to (y, v)
$$
where
$$
y = P_\ve(x) = \ve P(\tfrac{x}{\ve}), \qquad v(y)= u(x).
$$
By simple computations we get
$$
\begin{cases}
 -(|\dot{v}|^{p-2}\dot{v})^{\cdot} =  \lam^\ve Q_\ve(y) |v|^{p-2} v, &  y\in [0,L_\ve]\\
 v(0)=v(L_\ve)=0
 \end{cases}
$$
where
$$\quad\cdot =    d/dy,$$
with
$$
L_\ve = \int_0^1 \frac{1}{a_\ve(s)^{1/(p-1)}} ds \to L = \overline{a^{\frac{-1}{p-1}}},
$$
and
\begin{align*}
Q_\ve(y)& =  a_\ve(x)^{1/(p-1)} \rho_\ve(x) \\
& =   a(P^{-1}(\tfrac{y}{\ve}))^{1/(p-1)}\rho(P^{-1}(\tfrac{y}{\ve})) \\
& =  Q(\tfrac{y}{\ve}).
\end{align*}

Observe that $Q$ is an $L-$periodic function.

Moreover, it is easy to see that
\begin{equation}\label{cota.Lve}
|L_\ve - L| \le \ve L
\end{equation}
and that $L_\ve = L$ if $\ve=1/j$ for some $j\in \N$.

In order to apply Theorem \ref{teo.1d} we need to rescale to the unit interval. So we define
$$
w(z) = v(L_\ve z), \qquad z\in I
$$
and get
$$
\begin{cases}
-(|\dot w|^{p-2}\dot w)^\cdot = L_\ve^p\lam^\ve Q_\ve(L_\ve z) |w|^{p-2}w & \mbox{ in } I\\
w(0)=w(1)=0
\end{cases}
$$
So if we denote $\delta = \ve L/L_\ve$, $\mu^\delta = L_\ve^{p}\lam^\ve$ and $g(z) = Q(Lz)$, we get that $g$ is a $1-$periodic function and that $w$ verifies
$$
\begin{cases}
-(|\dot w|^{p-2}\dot w)^\cdot = \mu^\delta g(\tfrac{z}{\delta}) |w|^{p-2}w & \mbox{ in } I\\
w(0)=w(1)=0
\end{cases}
$$
Now we can apply Theorem \ref{teo.1d} to the eigenvalues $\mu^\delta$ to get
\begin{equation}\label{cota.mu}
|\mu_k^\delta - \mu_k|\leq \frac{p}{2} \frac{\|g-\bar g\|_1}{g_-^2} \Big(\frac{g_+}{g_-}\Big)^{\frac{1}{p}} \delta (\pi_p k)^{p+1}.
\end{equation}
In the case where $\ve = 1/j$ with $j\in \N$ we directly obtain
$$
|\lam_k^\ve - \lam_k| \leq \frac{1}{L^{p}}\frac{p}{2} \frac{\|g-\bar g\|_1}{g_-^2} \Big(\frac{g_+}{g_-}\Big)^{\frac{1}{p}} \ve (\pi_p k)^{p+1}.
$$

In the general case, one has to measure the defect between $L$ and $L_\ve$. So,
\begin{equation}\label{cota.g2}
|\lam_k^\ve - \lam_k| \le \frac{1}{L^{p}}( |\mu_k^\delta - \mu_k| + \lam_k^\ve |L_\ve^{p}-L^{p}|)
\le \frac{1}{L^p} ( |\mu_k^\delta - \mu_k| + \frac{\beta}{\rho_-}\pi_p^p k^p |L_\ve^{p}-L^{p}|).
\end{equation}
From \eqref{cota.Lve} it is easy to see that
$$
|(\tfrac{L_\ve}{L})^{p}-1|\le p(1+\ve)^{p-1}\ve.
$$
so
\begin{equation}\label{cota.g3}
|L_\ve^{p}-L^{p}| = L^{p}|(\tfrac{L_\ve}{L})^{p}-1| \le
p L^p (1+\ve)^{p-1}\ve.
\end{equation}

Finally, using \eqref{cota.mu}, \eqref{cota.g2} and \eqref{cota.g3} we obtain:
\begin{thm}
Let $\lam_k^\ve$ be the $kth-$eigenvalue of
$$
\begin{cases}
-(a(\tfrac{x}{\ve})u')' = \lam^\ve \rho(\tfrac{x}{\ve}) |u|^{p-2}u & \text{in } I=(0,1)\\
u(0)=u(1)=0
\end{cases}
$$
and let $\lam_k$ be the $kth-$eigenvalue of the homogenized limit problem
$$
\begin{cases}
-(a^*_p |u'|^{p-2}u)' = \lam \bar\rho |u|^{p-2}u & \text{in } I\\
u(0)=u(1)=0.
\end{cases}
$$

Then, if $\ve=1/j$ for some $j\in\N$,
$$
|\lam_k^\ve - \lam_k|\le \frac{1}{L^p}\frac{p}{2} \frac{\|g-\bar g\|_1}{g_-^2} \Big(\frac{g_+}{g_-}\Big)^{\frac{1}{p}} \ve (\pi_p k)^{p+1}
$$
and if $\ve\neq1/j$,
$$
|\lam_k^\ve - \lam_k|\le \frac{1}{L^p}\frac{p}{2} \frac{\|g-\bar g\|_1}{g_-^2} \Big(\frac{g_+}{g_-}\Big)^{\frac{1}{p}} \frac{\ve}{1-\ve} (\pi_p k)^{p+1} + \frac{\beta}{\rho_-} p L^p (1+\ve)^{p-1}\ve (\pi_p k)^p.
$$
\end{thm}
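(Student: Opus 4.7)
The plan is to reduce problem \eqref{coef.osc} to a problem of the form treated in Theorem \ref{teo.1d} by means of a Liouville-type change of variables that absorbs the diffusion coefficient, and then keep careful track of the scaling constants. Concretely, I would first define the antiderivative $P(x)=\int_0^x a(s)^{-1/(p-1)}\,ds$ and use the substitution $y=\ve P(x/\ve)$, $v(y)=u(x)$, which transforms \eqref{coef.osc} into an equation of pure $p$-Laplace form
$$
-(|\dot v|^{p-2}\dot v)^{\cdot}=\lam^\ve Q(\tfrac{y}{\ve})|v|^{p-2}v\quad\text{on }[0,L_\ve],
$$
where $Q(s)=a(P^{-1}(s))^{1/(p-1)}\rho(P^{-1}(s))$ is $L$-periodic and $L_\ve=\int_0^1 a(s/\ve)^{-1/(p-1)}\,ds\to L=\overline{a^{-1/(p-1)}}$. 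The key observation here is that the factor $a(x/\ve)^{-1/(p-1)}$ appearing in $dy/dx$ cancels with a factor of $a(x/\ve)$ coming from $(du/dx)^{p-1}$, and that the resulting weight $Q$ is still periodic in its fast variable.

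Next I would rescale $[0,L_\ve]$ to the unit interval by setting $w(z)=v(L_\ve z)$ and $g(z)=Q(Lz)$, so that $g$ is $1$-periodic and $w$ solves
$$
-(|\dot w|^{p-2}\dot w)^{\cdot}=\mu^\delta g(\tfrac{z}{\delta})|w|^{p-2}w\quad\text{on }I,\qquad \mu^\delta:=L_\ve^p\lam^\ve,\quad \delta:=\ve L/L_\ve.
$$
Theorem \ref{teo.1d} applied to this problem gives the bound \eqref{cota.mu} for $|\mu_k^\delta-\mu_k|$. The limit eigenvalue $\mu_k$ of the rescaled problem equals $L^p\lam_k$, since the homogenized equation $-(a_p^*|u'|^{p-2}u')'=\lam\bar\rho|u|^{p-2}u$ corresponds under the same change of variables (with $\ve=0$) to a $p$-Laplacian on $[0,L]$ with constant weight.

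The case $\ve=1/j$ is essentially immediate: then $L_\ve=L$, $\delta=\ve$, and dividing \eqref{cota.mu} by $L^p$ yields the first bound. The main technical point, and the obstacle that requires care, is the case $\ve\ne 1/j$: here $L_\ve\ne L$, so $\mu^\delta$ and $L^p\lam^\ve$ differ, and $\delta=\ve L/L_\ve$ is not exactly $\ve$. I would split
$$
|\lam_k^\ve-\lam_k|\le \tfrac{1}{L^p}\bigl(|\mu_k^\delta-\mu_k|+\lam_k^\ve|L_\ve^p-L^p|\bigr),
$$
control $|L_\ve^p-L^p|$ via \eqref{cota.Lve} and the elementary inequality $|(L_\ve/L)^p-1|\le p(1+\ve)^{p-1}\ve$, and control $\lam_k^\ve$ by comparing with the $p$-Laplacian eigenvalues as in Section~3 to get $\lam_k^\ve\le(\beta/\rho_-)(\pi_p k)^p$. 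Finally, bounding $\delta\le\ve/(1-\ve)$ via \eqref{cota.Lve} and inserting everything into \eqref{cota.mu} produces the second bound of the theorem.
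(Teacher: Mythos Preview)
Your proposal is correct and follows essentially the same route as the paper: the Liouville change of variables $y=\ve P(x/\ve)$, the rescaling to the unit interval with $\mu^\delta=L_\ve^p\lam^\ve$ and $\delta=\ve L/L_\ve$, the application of Theorem~\ref{teo.1d}, and the splitting $|\lam_k^\ve-\lam_k|\le L^{-p}(|\mu_k^\delta-\mu_k|+\lam_k^\ve|L_\ve^p-L^p|)$ together with the bounds on $|L_\ve^p-L^p|$, $\lam_k^\ve$, and $\delta\le\ve/(1-\ve)$ are exactly the paper's argument. Your explicit identification $\mu_k=L^p\lam_k$ is a point the paper leaves implicit but uses in the same way.
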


\section{Convergence of nodal domains}

In this section we prove the following result about the convergence of the nodal sets and of the zeros of the eigenfunctions.

\begin{thm} \label{teo-medida-dom}
Let $(\lam_k^\ve,u_k^\ve)$ and $(\lam_k,u_k)$ be eigenpairs associated to equations
\eqref{Pve} and \eqref{Plim} respectively. We denote by $\mathcal{N}_k^\ve$ and
$\mathcal{N}_k$ to a nodal domains of $u_k^\ve$ and $u_k$ respectively. Then
$$ |\mathcal{N}_k^\ve| \cf |\mathcal{N}_k| \quad \textrm{ as }\ve \cf 0$$
and we have the estimate
$$ \left| |\mathcal{N}_k^\ve|^{-p} -|\mathcal{N}_k|^{-p} \right| \leq   c\ve(k^{p+1}+1) $$
\end{thm}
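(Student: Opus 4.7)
The plan is to exploit the one-dimensional structure of the problem. By Theorem~\ref{teo.zero}, $u_k^\ve$ has exactly $k-1$ zeroes and so $k$ nodal domains, and on each nodal subinterval its restriction is (up to sign) the first Dirichlet eigenfunction of $-(|v'|^{p-2}v')' = \lam\rho_\ve|v|^{p-2}v$. The same statement holds for $u_k$ on $\mathcal{N}_k$ with the constant weight $\bar\rho$. Since the limit problem has constant coefficients, Theorem~\ref{draman} yields the explicit identity
\begin{equation*}
\lam_k = \frac{\pi_p^p}{\bar\rho\,|\mathcal{N}_k|^p},\qquad\text{equivalently}\qquad |\mathcal{N}_k|^{-p}=\frac{\bar\rho\,\lam_k}{\pi_p^p}.
\end{equation*}

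For $\mathcal{N}_k^\ve=(a,b)$ of length $\ell$, no such closed formula is available, but we apply Theorem~\ref{teo.1d} with index $1$ to the rescaled problem on the unit interval. Setting $y=(x-a)/\ell$, the oscillating weight becomes $\phi(y/\tilde\ve)$ with $\tilde\ve=\ve/\ell$ and $\phi(z):=\rho(a/\ve+z)$, which is still $1$-periodic with the same mean $\bar\rho$ and the same $L^\infty$ and $L^1$ bounds as $\rho$ by periodicity. The first Dirichlet eigenvalue of the rescaled problem is $\ell^p\lam_k^\ve$, so Theorem~\ref{teo.1d} yields
\begin{equation*}
\Bigl|\ell^p\lam_k^\ve-\frac{\pi_p^p}{\bar\rho}\Bigr|\le \frac{C\ve}{\ell}\pi_p^{p+1},\qquad\text{i.e.}\qquad \Bigl||\mathcal{N}_k^\ve|^{-p}-\frac{\bar\rho\,\lam_k^\ve}{\pi_p^p}\Bigr|\le \frac{C\ve}{|\mathcal{N}_k^\ve|^{p+1}}.
\end{equation*}
Subtracting the analogous identity for $|\mathcal{N}_k|^{-p}$ and invoking the main estimate $|\lam_k^\ve-\lam_k|\le C\ve k^{p+1}$ from Theorem~\ref{teo.1d} leaves two error terms, of order $C\ve k^{p+1}$ and $C\ve|\mathcal{N}_k^\ve|^{-(p+1)}$ respectively.

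The remaining task is to control $|\mathcal{N}_k^\ve|^{-(p+1)}$ by $k^{p+1}$. The variational lower bound $\lam_k^\ve \ge \pi_p^p/(\rho^+|\mathcal{N}_k^\ve|^p)$ gives $|\mathcal{N}_k^\ve|^{-(p+1)}\le C(\lam_k^\ve)^{(p+1)/p}$; together with the upper bound $\lam_k^\ve\le Ck^p$ derived in Section~3 via comparison with the $p$-Laplacian and Friedlander's estimate, this produces $|\mathcal{N}_k^\ve|^{-(p+1)}\le Ck^{p+1}$, closing the chain and giving the announced estimate (absorbing the $+1$ to cover small $k$). The convergence $|\mathcal{N}_k^\ve|\to |\mathcal{N}_k|$ then follows at once, because the derived bound keeps $|\mathcal{N}_k^\ve|$ in a compact subset of $(0,\infty)$ on which $t\mapsto t^{-1/p}$ is Lipschitz. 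The main technical subtlety is the rescaling step: one must verify that the shifted periodic weight $\phi$ lies in the scope of Theorem~\ref{teo.1d} with constants \emph{independent} of $a$ and $\ell$, which is exactly what the $1$-periodicity of $\rho$ provides.
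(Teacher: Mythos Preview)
Your proposal is correct and follows essentially the same route as the paper's proof: both arguments rest on the identity $\lam_k^\ve(I)=\lam_1^\ve(\mathcal{N}_k^\ve)$, the explicit formula $\lam_1(\mathcal{N})=\pi_p^p/(\bar\rho\,|\mathcal{N}|^p)$ for the limit problem, and the one-dimensional rate estimate (Theorem~\ref{teo.1d}) applied once on the full interval with index $k$ and once on the nodal domain with index $1$. The only organizational difference is that the paper derives the two inequalities $|\mathcal{N}_k^\ve|^{-p}-|\mathcal{N}_k|^{-p}\lessgtr c\ve(k^{p+1}+1)$ separately via upper/lower eigenvalue bounds, whereas you combine them through a single triangle-inequality step; your explicit verification that the rescaled shifted weight $\phi(z)=\rho(a/\ve+z)$ remains $1$-periodic with the same mean and bounds is a point the paper leaves implicit.
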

\begin{proof}

By using Theorem \ref{explicit}, together with \eqref{intervaloL} and the explicit
form of the eigenvalues of the limit problem we obtain that
\begin{align} \label{w1}
\lam_k^\ve(I)=\lam_1^\ve(\mathcal{N}_k^\ve) \leq \lam_1(\mathcal{N}_k^\ve) + c|\mathcal{N}_k^\ve|^{p-1} \ve \leq \frac{\pi_p^p}{\overline{\rho} |\mathcal{N}_k^\ve|^p} + c \ve.
\end{align}
Also,
\begin{align}\label{w2}
\lam_k^\ve(I)\geq \lam_k(I) - c \ve k^{p+1} = \frac{k^{p}\pi_p^p}{\overline{\rho}} -c \ve k^{p+1}.
\end{align}
As $w_k(x) = \sin_p(k\pi_p x)$ (see Theorem \ref{draman}) has $k$ nodal domain in $I$
we must have $|\mathcal{N}_k|=k^{-1}$. Then by (\ref{w1}) and (\ref{w2}) we get
$$ \frac{\pi_p^p}{\overline{\rho} |\mathcal{N}_k|^p} - c \ve k^{p+1}\leq  \frac{1}{|\mathcal{N}_k^\ve|^p} \frac{\pi_p^p}{\overline{\rho}} +c\ve$$
it follows that
\begin{equation} \label{ex1}
 |\mathcal{N}_k|^{-p} -   |\mathcal{N}_k^\ve|^{-p}  \leq c\ve(k^{p+1}+1).
\end{equation}
Similarly we obtain that
$$
\frac{ \pi_p^p}{\overline{\rho}|\mathcal{N}_k|^p}=\lam_1(\mathcal{N}_k)= \lam_k(I) \geq \lam_k^\ve(I)-c\ve k^{p+1} \geq \lam_1^\ve(\mathcal{N}_k^\ve)-c\ve k^{p+1}
$$
and using again Theorem \ref{explicit} we get
$$
\lam_1^\ve(\mathcal{N}_k^\ve)\geq \lam_1 (\mathcal{N}_k^\ve)-c\ve=\frac{\pi_p^p}{\overline{\rho}|\mathcal{N}_k^\ve|^p} -c\ve
$$
it follows that
\begin{equation} \label{ex2}
|\mathcal{N}_k^\ve|^{-p}  -  |\mathcal{N}_k|^{-p} \leq c\ve(k^{p+1}+1).
\end{equation}
Combining \eqref{ex1} and \eqref{ex2} the result follows.
\end{proof}
Finally, as a corollary of Theorem \ref{teo-medida-dom} we are able to prove the
individual convergence of the zeroes of the eigenfunctions of \eqref{Pve} to those of
the limit problem \eqref{Plim}.
\begin{cor}
Let $(\lam_k^\ve,u_k^\ve)$ and $(\lam_k,u_k)$ be eigenpairs associated to equations
\eqref{Pve} and \eqref{Plim} respectively. Denote $x_j^\ve$ and $x_j$, $0\leq j \leq
k$ its respective zeroes. Then for each $1< j< k$
$$x_j^\ve \cf x_j \quad \textrm{ when }\ve \cf 0$$
and
$$|x_j^\ve - x_j| \leq jc\ve(k^{p+1}+1).$$
In particular $x_0^\ve=x_0=0$ and $x_k^\ve=x_k=1$ by the boundary condition.
\end{cor}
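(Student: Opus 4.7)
The plan is to express each zero $x_j^\ve$ as a cumulative sum of the lengths of the nodal intervals of $u_k^\ve$ and then invoke Theorem \ref{teo-medida-dom}, converting its bound on the $-p$-th powers of nodal-domain lengths into a bound on the lengths themselves. By Theorem \ref{teo.zero}, both $u_k$ and $u_k^\ve$ have exactly $k-1$ interior zeros in $I=(0,1)$, yielding $k$ consecutive nodal intervals. For the limit problem, since $\bar\rho$ is constant, Theorem \ref{draman} identifies $u_k$ up to a constant factor with $\sin_p(k\pi_p x)$, so the zeros are $x_j = j/k$ and every nodal interval has length $|\mathcal{N}_k| = 1/k$.

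Setting $\mathcal{N}_i^\ve = (x_{i-1}^\ve, x_i^\ve)$, the identity $x_j^\ve - x_j = \sum_{i=1}^{j}(|\mathcal{N}_i^\ve| - 1/k)$ together with the triangle inequality reduce the claim to showing
\[
\bigl||\mathcal{N}_i^\ve| - \tfrac{1}{k}\bigr| \le c\ve(k^{p+1}+1) \qquad \text{for every } i,
\]
since $j$ such summands then produce the stated bound $jc\ve(k^{p+1}+1)$. Convergence $x_j^\ve \cf x_j$ will then follow immediately by letting $\ve \cf 0$; the equalities $x_0^\ve = x_0 = 0$ and $x_k^\ve = x_k = 1$ are enforced by the Dirichlet boundary conditions.

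The main step is the conversion from the estimate $\bigl||\mathcal{N}_i^\ve|^{-p} - k^p\bigr| \le c\ve(k^{p+1}+1)$ supplied by Theorem \ref{teo-medida-dom} to the desired bound on $\bigl||\mathcal{N}_i^\ve| - 1/k\bigr|$. Applying the mean value theorem to $t \mapsto t^{-p}$ gives
\[
\bigl||\mathcal{N}_i^\ve|^{-p} - k^p\bigr| = p\,\xi_i^{-p-1}\,\bigl||\mathcal{N}_i^\ve| - \tfrac{1}{k}\bigr|
\]
for some $\xi_i$ between $|\mathcal{N}_i^\ve|$ and $1/k$. Both of these are strictly positive and at most $1$ (the nodal intervals sit inside $I$ and $k\ge 1$), so $\xi_i\in(0,1]$ and $\xi_i^{-p-1}\ge 1$; hence
\[
\bigl||\mathcal{N}_i^\ve| - \tfrac{1}{k}\bigr| \le \tfrac{1}{p}\bigl||\mathcal{N}_i^\ve|^{-p} - k^p\bigr| \le \tfrac{c}{p}\ve(k^{p+1}+1),
\]
which gives the summand bound after absorbing $1/p$ into the constant. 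The only delicate point is justifying this mean-value step with the crude range $\xi_i\in(0,1]$, and this is automatic from the geometry of $I$; one does not need the (otherwise sharper) upper bound $\xi_i\lesssim 1/k$, which would merely improve the constant without affecting the rate stated in the corollary.
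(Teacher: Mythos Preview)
Your proof is correct and follows the same approach as the paper: write $x_j^\ve - x_j$ as the telescoping sum $\sum_{i=1}^{j}\bigl(|\mathcal{N}_i^\ve| - |\mathcal{N}_i|\bigr)$ and apply Theorem~\ref{teo-medida-dom} to each summand. In fact you are more careful than the paper on one point: the paper's proof invokes $\bigl||\mathcal{N}_{k,i}^\ve| - |\mathcal{N}_{k,i}|\bigr| \le c\ve(k^{p+1}+1)$ directly, whereas Theorem~\ref{teo-medida-dom} only bounds the difference of the $-p$-th powers, and your mean-value-theorem step (using $\xi_i\in(0,1]$ so that $p\,\xi_i^{-p-1}\ge 1$) cleanly supplies this missing conversion.
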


\begin{proof}
With the notation of Theorem \ref{teo-medida-dom} we have that $|\mathcal{N}_k^\ve| \cf |\mathcal{N}_k|$. For the first pair of nodal domains we get
$$|x_1^\ve- x_1|=|x_1^\ve - x_0^\ve - x_1 + x_0|=\left||\mathcal{N}_{k,1}^\ve|-|\mathcal{N}_{k,1}| \right| \leq c\ve(k^{p+1}+1)$$
for the second couple
$$|(x_2^\ve - x_2) -(x_1^\ve -x_1)|=\left||\mathcal{N}_{k,2}^\ve|-|\mathcal{N}_{k,2}| \right| \leq c\ve(k^{p+1}+1)$$
then
$$|x_2^\ve - x_2| \leq c\ve(k^{p+1}+1) +|x_1^\ve -x_1| \leq 2c\ve(k^{p+1}+1).$$

We iterate the reasoning for $j< k$,
$$
|x_{j}^\ve - x_{j}| \leq jc\ve(k^{p+1}+1)
$$
and the proof is complete.
\end{proof}

\section{Some examples and numerical results}

We define the following Pr\"ufer transformation:
\begin{equation} \label{prufer}
\begin{cases}
\left(\frac{\lam r(x)}{p-1} \right)^{1/p}u(x) & = \rho(x)S_p(\varphi(x)), \\
u'(x) & =  \rho(x)C_p(\varphi(x))
\end{cases}
\end{equation}
As in \cite{Pin}, we can see show that $\rho(x)$ and $\varphi(x)$ are continuously
differentiable functions satisfying
\begin{equation} \label{ec_prufer}
\begin{cases}
\varphi'(x)&=\left(\frac{\lam r(x)}{p-1}\right)^{\frac{1}{p}}+\frac{1}{p} \frac{r'(x)}{r(x)} |C_p(\varphi(x))|^{p-2}C_p(\varphi(x))S_p(\varphi(x)) \\
\rho'(x)&=\frac{1}{p} \frac{r'(x)}{r(x)} \rho(x)|S_p(\varphi(x))|^p
\end{cases}
\end{equation}
and we obtain that
$$
u_k(x)=\left(\frac{\lam_k r(x)}{p-1}\right)^{-1/p} \rho_k(x) S_p(\varphi_k(x)), \quad k\geq 1
$$
is a eigenfunction of problem \eqref{unaec} corresponding to $\lam_k$ with zero Dirichlet
boundary conditions.

 We propose the following algorithm to compute the eigenvalues of
problem \eqref{unaec} based in the fact that the eigenfunction associate to $\lam_k$ has
$k$ nodal domain in $I$, so the phase function $\varphi$ must vary between $0$ and
$k\pi_p$. It consists in a shooting method combined with a bisection algorithm (a
Newton-Raphson version can be implemented too).

\begin{align*}
    &\texttt{Let } a < \lam < b \texttt{ and let } \tau \texttt{ be the tolerance}\\
    &\texttt{Solve the ODE } \ref{ec_prufer} \texttt{ and obtain } \varphi_\lam \texttt{ and } \rho_\lam \\
    &\texttt{Let } w(x)=(p-1)^{1/p}\left(\lam r(x)\right)^{-1/p} \rho_\lam(x) S_p(\varphi_\lam(x)) \\
    &\texttt{Let } \alpha= w(1)\\
    &\texttt{while }(|\alpha|\geq \tau) \\
    &\quad \quad \lam=(a+b)/2 \\
    &\quad \quad \texttt{Solve the ODE }\ref{ec_prufer} \texttt{ and obtain } \varphi_\lam \texttt{ and } \rho_\lam \\
    &\quad \quad \texttt{Let } w(x)=(p-1)^{1/p}\left(\lam r(x)\right)^{-1/p} \rho_\lam(x) S_p(\varphi_\lam(x)) \\
    &\quad \quad \texttt{Let } \beta= w(1) \\
    &\quad \quad \texttt{If } (\alpha \beta <0) \\
    &\quad \quad \quad \quad b=(a+b)/2 \\
    &\quad \quad \texttt{else} \\
    &\quad \quad \quad \quad a=(a+b)/2 \\
    &\texttt{end while} \\
    &\texttt{Then }\lam \texttt{ is the aproximation of eigenvalue with error } \leq \tau
\end{align*}

For example, let us consider $r(x)=2+\sin (2\pi x)$. In this case we obtain that
$\overline{r}=\int_I 2+\sin(2\pi x) dx =2$, and the eigenvalues of the limit problem
are given by
$$\lam_k^{1/p}=\frac{k\pi_p}{2^{1/p}}.$$ When $\ve$ tends to zero the value
of $\lam^\ve$ tends to the limit value $\lam$ displaying oscillations.

When $p=2$ the first limit eigenvalue is $\sqrt{\lam_1}=\pi/\sqrt{2}\sim 2.221441469$.
We see the oscillating behavior when plot $\sqrt{\lam_1^\ve}$ as function of $\ve$ in
Figure \ref{fig:p1}.
\begin{figure}[ht]
    \begin{center}
          \includegraphics{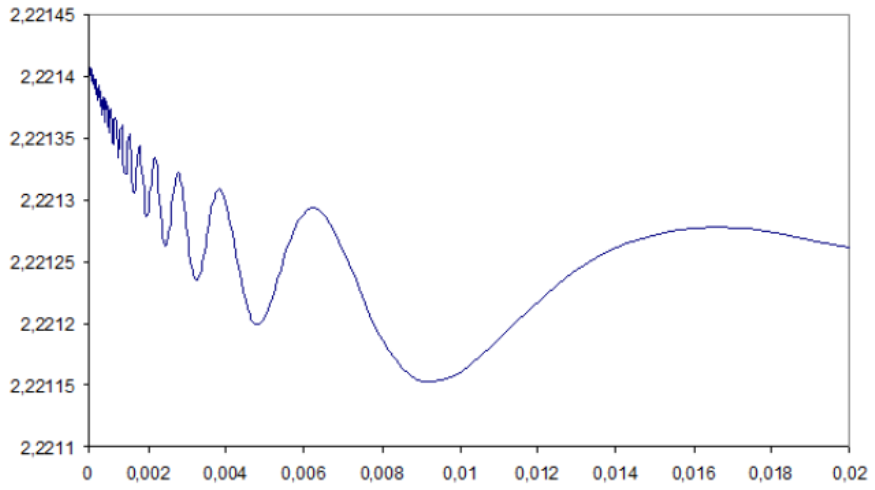}
    \end{center}
    \caption{The square root of the first eigenvalue as a function of $\ve$  when $r(x)=2+\sin (2\pi x)$.}
    \label{fig:p1}
\end{figure}

A more complex behavior can be found in Figure \ref{fig:p3}, where we considered the
weight $r(x)=\frac{1}{2+\sin{2\pi x}}$. We observe that the sequence tends to
$$\lam_1=\pi^2/\int_I \frac{1}{2+\sin{2\pi x}} dx = \sqrt{3} \pi \sim 17.09465627.$$
\begin{figure}[ht]
    \begin{center}
          \includegraphics[width=7.7cm]{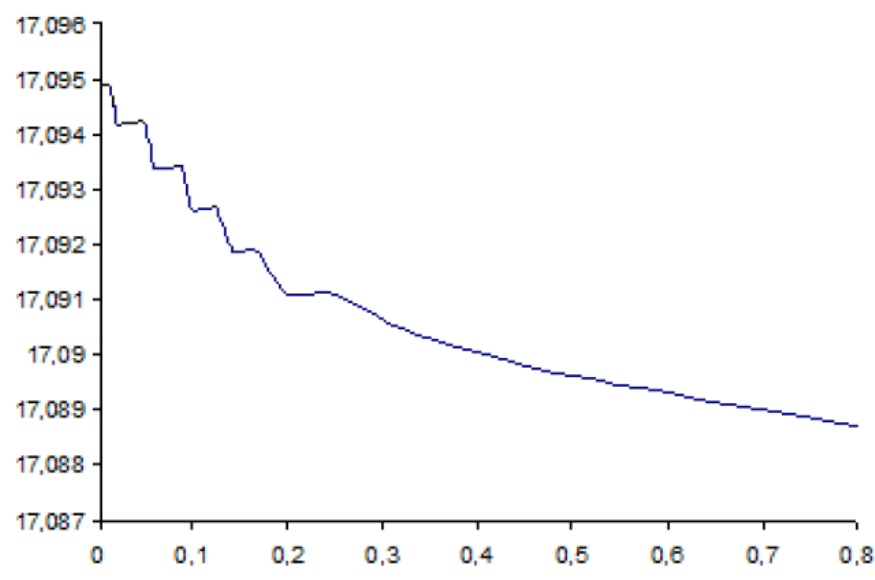}
    \end{center}
    \caption{The square root of the first eigenvalue as a function of $\ve$ when $r(x)=\frac{1}{2+\sin{2\pi x}}$.}
    \label{fig:p3}
\end{figure}

It is not clear why the convergence of the first eigenvalue display the oscillations
and the monotonicity observed (although the monotonicity is reversed for the weight
$r(x)=2-\sin{2\pi x}$). We believe that some Sturmian type comparison theorem with
integral inequalities for the weights (instead of point-wise inequalities as usual) is
involved. However, we are not able to prove it, and for higher eigenvalues it is not
clear what happens.

Turning now to the eigenfunctions, with the weight $r(x)=2+\sin (2\pi x)$,  the
normalized eigenfunction associated to the first eigenvalue of the limit problem is
given by $u_1(x)=\pi^{-1}\sin(\pi x)$. Applying the numerical algorithm we obtain that
the graph of an eigenfunction associated to the first eigenvalue $\lam_1^\ve$
intertwine with the graph of $u_1(x)$. When $\ve$ decreases, the number of crosses
increases, and  the amplitude of the difference between them decreases. In Figure
\ref{fig:p2} we can observe this behavior and the difference between $u_1$ and
$u_1^\ve$ for different values of $\ve$.

\begin{figure}[ht]
    \begin{center}
          \includegraphics[width=12cm]{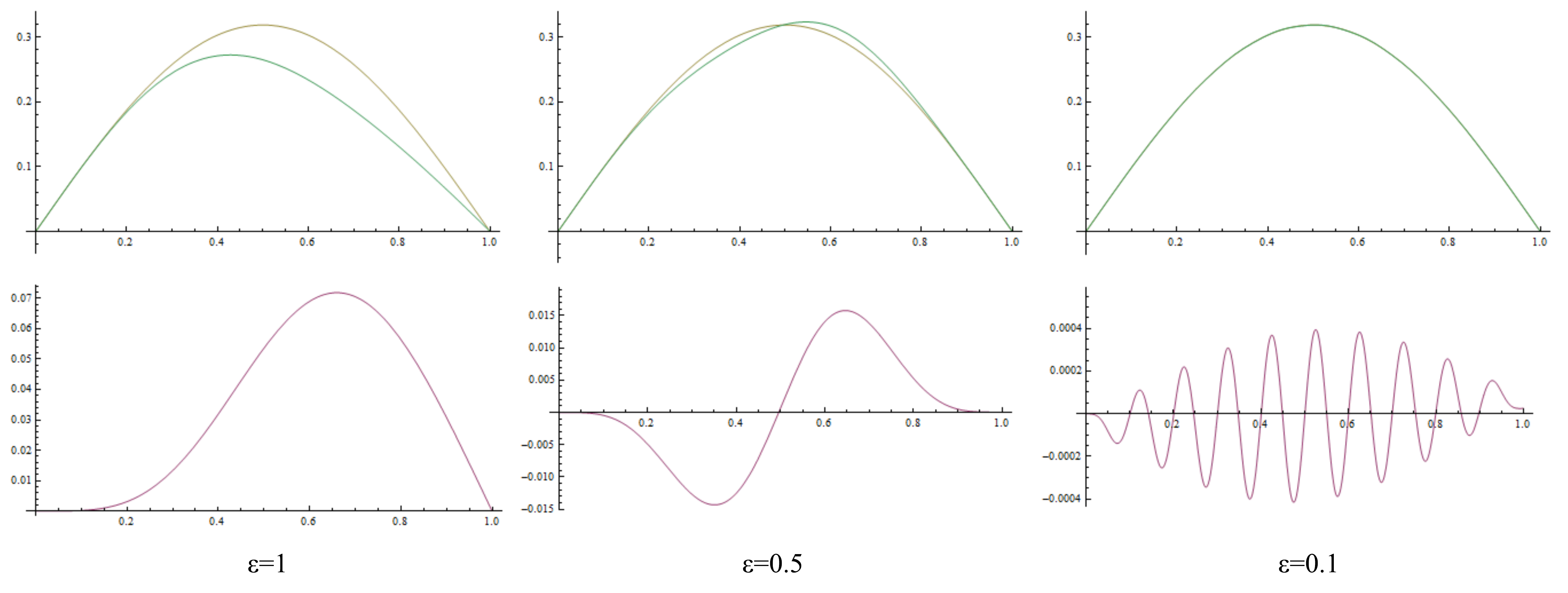}
    \end{center}
    \caption{The first eigenfunctions and the difference between them
    for different values of $\ve$.}
    \label{fig:p2}
\end{figure}

To our knowledge, it is not known any result about the number of the oscillations as
$\ve$ decreases, nor it is known if those oscillations disappear for $\ve$
sufficiently small.


The same behavior seems to hold for the higher eigenfunctions, see in Figure
\ref{fig:p4} the behavior of the fourth eigenfunction $u_4^\ve$ when the parameter
$\ve$ decrease.

\begin{figure}[ht]
    \begin{center}
          \includegraphics[width=12cm]{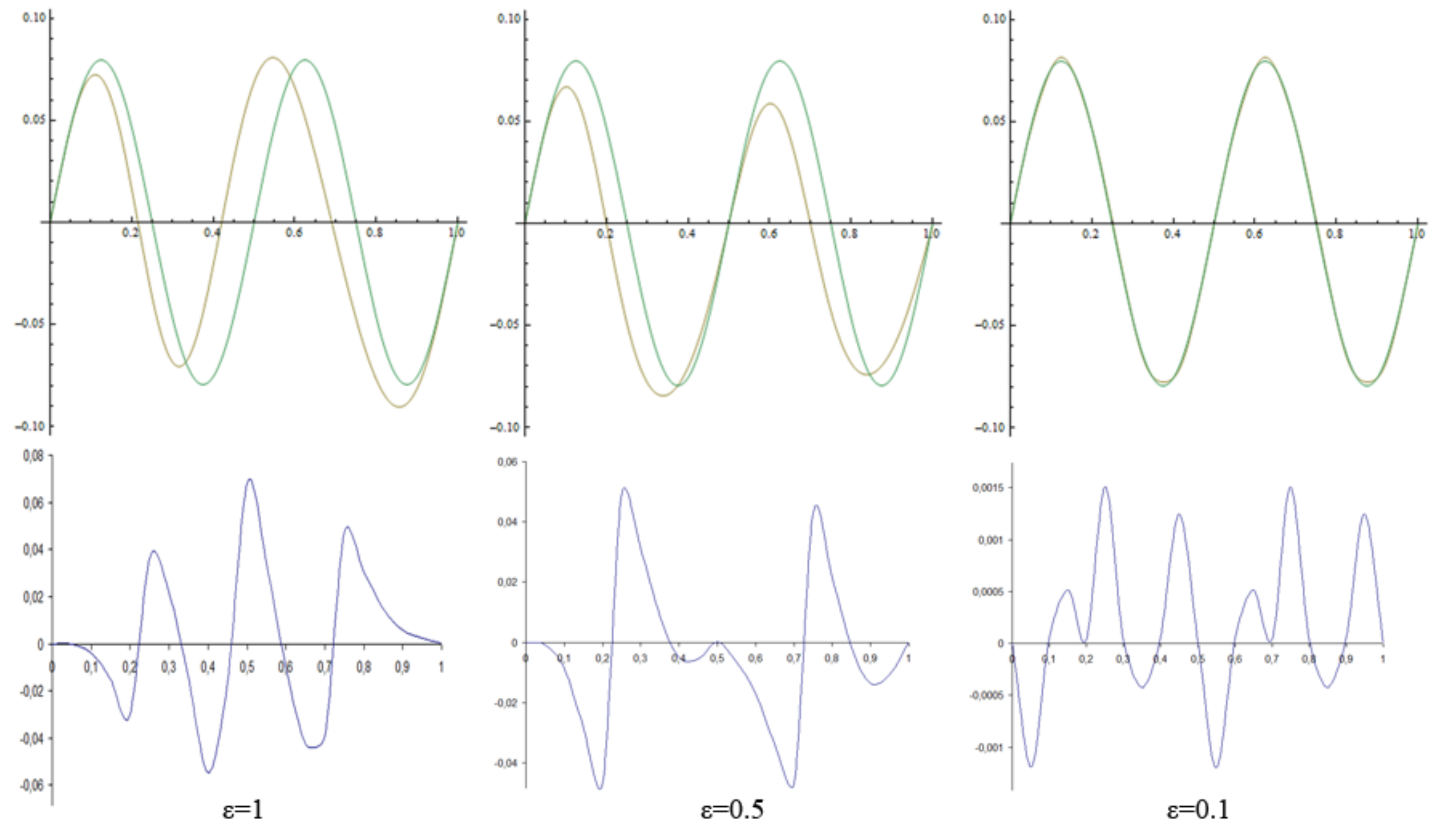}
    \end{center}
    \caption{The fourth eigenfunctions and the difference between them for different values of $\ve$.}
    \label{fig:p4}
\end{figure}

Here, the convergence of the nodal domains and the fact that the restriction of an
eigenfunction to one of its nodal domains $\mathcal{N}$ coincides with the first
eigenfunction of the problem in $\mathcal{N}$, together with the continuous dependence
of the eigenfunctions on the weight and the length of the domain, suggest that the
presence or not of oscillations for the higher eigenfunctions must be the same as for
the first one. However, the computations show very complex patterns in the
oscillations.

\section*{Acknowledgements}

This work was partially supported by Universidad de Buenos Aires under grant 20020100100400 and by CONICET (Argentina) PIP 5478/1438.

\def\cprime{$'$}
\providecommand{\bysame}{\leavevmode\hbox to3em{\hrulefill}\thinspace}
\providecommand{\MR}{\relax\ifhmode\unskip\space\fi MR }
\providecommand{\MRhref}[2]{%
  \href{http://www.ams.org/mathscinet-getitem?mr=#1}{#2}
}
\providecommand{\href}[2]{#2}


\begin{thebibliography}{99}

\bibitem{AD} G. Acosta and R. Dur\'an, \emph{An optimal Poincaré inequality in $L^1$
    for
    convex domains}, Proceedings of the
    American Mathematical Society (2004): 195--202.

\bibitem{All} G. Allaire, \emph{Shape optimization by the homogenization
    method},
  Applied Mathematical Sciences, vol. 146, Springer-Verlag, New York, 2002.
  \MR{1859696 (2002h:49001)}

\bibitem{ACM} A. Anane, O. Chakrone, and M. Moussa, \emph{Spectrum of one dimensional
  $p$-Laplacian operator with indefinite weight}, Electron. J. Qual. Theory
  Differ. Equ. \textbf{17}, no. 11 (2002). \MR{1942086 (2004b:34044)}

\bibitem{Con} L. Baffico, C. Conca, and M. Rajesh, \emph{Homogenization of a class of
  nonlinear eigenvalue problems}, Proc. Roy. Soc. Edinburgh Sect. A
  \textbf{136}, no. 1 (2006) 7--22. \MR{2217505 (2007b:35021)}

\bibitem{BM76}  L. Boccardo, and P.  Marcellini, \textit{Sulla convergeuza delle soluzioni di disequazioni variazionali.},
 Annali di Mat. Pura e Applicata \textbf{90} (1976) 137--159.

\bibitem{DF1992} Braides A., Chiad\'o Piat V. \& Rajesh M. \textit{Homogenization of
    almost periodic monotone operators}, Annales de l'I. H. P., section C, tome 9, no
   4 (1992), p. 399-432

\bibitem{Zua2} C. Castro and E. Zuazua, \emph{High frequency asymptotic analysis of a
    string
  with rapidly oscillating density}, European J. Appl. Math. \textbf{11}, no. 6
  (2000) 595--622. \MR{1811309 (2001k:34093)}

\bibitem{Zua1} C. Castro and E. Zuazua, \emph{Low frequency asymptotic analysis
    of a   string with rapidly oscillating density}, SIAM J. Appl. Math. \textbf{60}, no. 4
  (2000) 1205--1233 (electronic). \MR{1760033 (2001h:34117)}

\bibitem{ChP07} T. Champion and L. De Pascale, \emph{Asymptotic behavior of
    non
    linear
    eigenvalue problems involving p- Laplacian type operators}, Proc. Roy. Soc.
    Edinburgh Sect. A \textbf{137} (2007) 1179--1195.


\bibitem{Chi1} Valeria Chiad{\`o} Piat, Gianni Dal Maso, and Anneliese Defranceschi,
  \emph{{$G$}-convergence of monotone operators}, Ann. Inst. H. Poincar\'e
  Anal. Non Lin\'eaire \textbf{7}, no. 3 (1990) 123--160. \MR{1065871
  (91f:49018)}

\bibitem{Wheeden}  S.-K. Chua and R. L. Wheeden, \emph{A note on sharp 1-dimensional
    Poincare inequalities}, Proc. AMS \textbf{134}, no. 8 (2006) 2309--2316.

\bibitem{Cio} Doina Cioranescu and Patrizia Donato, \emph{An introduction to
    homogenization},
  Oxford Lecture Series in Mathematics and its Applications, vol.~17, The
  Clarendon Press Oxford University Press, New York, 1999. \MR{1765047
  (2001j:35019)}

\bibitem{SPDG73}  E. De Giorgi, and S. Spagnolo, \emph{Sulla convergenza degli
    integrals dell'energia.},  Boll. Un. Mat. It. \textbf{8}  (1973) 391--411.

\bibitem{DDM} M. del Pino, P. Dr{\'a}bek, and R. Man{\'a}sevich,
              \emph{The Fredholm alternative at the first eigenvalue for the
              one-dimensional $p$-Laplacian},
 J. Differential Equations \textbf{151} no. 2 (1999) 386--419.


\bibitem{DM} P. Drabek and R. Man\'asevich, {\it On the Closed Solutions to some
    nonhomegeneous eigenvalue problemes with p-laplacian}, Differential Integral
    Equations, \textbf{12}, no. 6 (1999), 773-788.



\bibitem{FBP}  J. Fern\'andez Bonder and J. P. Pinasco, \emph{Asymptotic
  behavior of the eigenvalues of the one-dimensional weighted $p$-Laplace
  operator}, Ark. Mat. \textbf{41}, no. 2 (2003) 267--280. \MR{2011921
  (2005c:34172)}

\bibitem{FBPS} J. Fern\'andez Bonder, J. P. Pinasco, and A. M. Salort, \emph{ Eigenvalue Homogenization for quasilinear elliptic operators}.  Submitted, arXiv:1201.1219

\bibitem{Kenig} C. E. Kenig, Fanghua Lin, and Zhongwei Shen, \emph{Convergence
    rates in $l^2$ for elliptic homogenization problems} Archive for Rational
  Mechanics and Analysis \textbf{203}, no. 3 (2012) 1009--1036.

\bibitem{Kenig2} C. E. Kenig, Fanghua Lin, Zhongwei Shen, \emph{Estimates of
    Eigenvalues
    and
    Eigenfunctions in Periodic Homogenization}, preprint arXiv 1209.5458.


\bibitem{Friedlander} L. Friedlander, \emph{Asymtotic behavior of the eigenvalues of the
$p$-laplacian}, Comm. in PDE \textbf{14} no. 8 $\&$ 9 (1989) 1059--1070.

\bibitem{Kes1}
S. Kesavan, \emph{Homogenization of elliptic eigenvalue problems. {I}},
  Appl. Math. Optim. \textbf{5}, no. 2 (1979) 153--167. \MR{533617
  (80f:65111)}

\bibitem{Kes2}
\bysame, \emph{Homogenization of elliptic eigenvalue problems. {II}}, Appl.
  Math. Optim. \textbf{5}, no. 3 (1979) 197--216. \MR{546068 (80i:65110)}


\bibitem{BLP78} A. Bensoussan, J.-L. Lions, and G. Papanicolaou, \emph{Asymptotic
    analysis for
  periodic structures}, AMS Chelsea Publishing, Providence, RI, 2011, Corrected
  reprint of the 1978 original [MR0503330]. \MR{2839402}

\bibitem{Ol} O. A. Ole{\u\i}nik, A. S. Shamaev, and G. A. Yosifian, \emph{Mathematical
  problems in elasticity and homogenization}, Studies in Mathematics and its
  Applications, vol. 26, North-Holland Publishing Co., Amsterdam, 1992.
  \MR{1195131 (93k:35025)}

\bibitem{Pin} J. P. Pinasco
    \textit{The Asymptotic Behavior of Nonlinear Eigenvalues,}
 Rocky Mountain J. Math. \textbf{37}, no. 6 (2007), 1981--1988.\MR{2382638 (2008m:34192)}



\bibitem{Rab2} P. H. Rabinowitz, \emph{Minimax methods in critical point theory with
  applications to differential equations}, CBMS Regional Conference Series in
  Mathematics, vol. 65, Published for the Conference Board of the Mathematical
  Sciences, Washington, DC, 1986. \MR{845785 (87j:58024)}

\bibitem{SP70}
E. S{\'a}nchez-Palencia, \emph{\'{E}quations aux d\'eriv\'ees partielles
  dans un type de milieux h\'et\'erog\`enes}, C. R. Acad. Sci. Paris S\'er. A-B
  \textbf{272} (1971) A1410--A1413. \MR{0284064 (44 \#1294)}

\bibitem{Vogelius} F. Santosa, and M. Vogelius, \emph{ First-order
    corrections
    to the homogenized eigenvalues of a periodic composite medium}. SIAM J. Appl.
    Math. \textbf{53}, no. 6 (1993) 1636--1668.


\bibitem{SP68}
S. Spagnolo, \emph{Sulla convergenza di soluzioni di equazioni paraboliche ed
  ellittiche.}, Ann. Scuola Norm. Sup. Pisa (3) \textbf{22} (1968) 571-597; errata,
  ibid. \textbf{22} (1968) 673. \MR{0240443 (39 \#1791)}

\end{thebibliography}
\end{document}